\setlist{nolistsep}
\def \PFq[#1]{\mathbf{P}^#1(\mathbb{F}_q)}
\def \PF[#1,#2]{PG(#1,#2)}
\newtheorem{theorem}{Theorem}
\newtheorem{lemma}[theorem]{Lemma}
\newtheorem{definition}[theorem]{Definition}
\newtheorem{corollary}[theorem]{Corollary}
\begin{document}

\title{Essential dimension and the flats spanned by a point set}
\author{Ben Lund \footnote{Work on this paper was supported by NSF grant CCF-1350572 and by ERC Grant 267165 DISCONV.}}

\maketitle

\begin{abstract}
Let $P$ be a finite set of points in $\mathbb{R}^d$ or $\mathbb{C}^d$.
We answer a question of Purdy on the conditions under which the number of hyperplanes spanned by $P$ is at least the number of $(d-2)$-flats spanned by $P$.

In answering this question, we define a new measure of the degeneracy of a point set with respect to affine subspaces, termed the \textit{essential dimension}.
We use the essential dimension to give an asymptotic expression for the number of $k$-flats spanned by $P$, for $1 \leq k \leq d-1$.
\end{abstract}

\section{Introduction}\label{sec:introduction}

Let $P$ be a set of $n$ points in a real or complex, finite-dimensional, affine space.
We say that $P$ spans a $k$-flat\footnote{We refer to affine or projective subspaces as ``flats".} $\Gamma$ if $\Gamma$ contains $k+1$ affinely independent points of $P$.
Denote the number of $k$-flats spanned by $P$ by $f_k$; in particular, $f_{-1} = 1$ and $f_0 = n$.

Our question is:

{\centering When does $P$ span more $k$-flats than $(k-1)$-flats?

}

For $k=1$, a complete answer to this question is given by a classic theorem of de Bruijn and Erd\H{o}s \cite{bruijn1948combinatorial}.
This theorem is that either the number of lines spanned by $P$ is at least $n$, or $P$ is contained in a line; furthermore, equality is achieved only if $n-1$ points of $P$ are collinear.

It might be tempting to conjecture that $f_k \geq f_{k-1}$ unless $P$ is contained in a $k$-flat.
This is easily seen to be false for $k=2$, by considering a set of $n$ points in $\mathbb{R}^3$, of which $n/2$ are incident to each of a pair of skew lines; in this case, $f_2 = n$ and $f_1 = (n/2)^2 + 2$.
In 1986, Purdy \cite{purdy1986two} showed that either $n-1$ points of $P$ lie on a plane or the union of a pair of skew lines, or $f_2 = \Omega(f_1)$.

To answer our question in higher dimensions, we introduce a new measure of the degeneracy of a point set with respect to affine subspaces.
We say the \textit{essential dimension} of a point set $P$ is the minimum $t$ such that there exists a set $\mathcal{G}$ of flats such that
\begin{enumerate}
\item $P$ is contained in the union of the flats of $\mathcal{G}$,
\item each flat $\Gamma \in \mathcal{G}$ has dimension $\dim(\Gamma) \geq 1$, and
\item $\sum_{\Gamma \in \mathcal{G}} \dim(\Gamma) = t$.
\end{enumerate}
For example, a point set that lies in the union of two skew lines has essential dimension $2$.
For any set of points $P$, we denote the essential dimension of $P$ by $K(P)$, and we omit the argument if it is obvious from the context.

We additionally denote by $g_i$ the maximum cardinality of a subset $P' \subseteq P$ such that the essential dimension of $P'$ is at most $i$; i.e., $K(P') \leq i$.

We prove
\begin{theorem}\label{th:Purdy}
For each $k$, there is a constant $c_k$ such that the following holds.
Let $P$ be a set of $n$ points in a finite dimensional real or complex affine geometry.
\begin{enumerate}
\item If $n = g_k$ (i.e., $K(P) \leq k$), then either $f_{k-1} > f_k$, or $f_{k-1} = f_k = 0$.
\item If $n - g_k > c_k$, then $f_k > f_{k-1}$.
\end{enumerate}
\end{theorem}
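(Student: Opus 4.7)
I would prove Theorem~\ref{th:Purdy} by induction on $k$, handling both parts jointly. The base case $k = 1$ is the de~Bruijn--Erd\H{o}s theorem with $c_1 = 1$: if $K(P) \leq 1$ then $P$ is contained in a line, so $f_1 \leq 1 \leq n = f_0$ (with equality only in trivial cases), while $n - g_1 > 1$ forces the largest collinear subset of $P$ to have at most $n - 2$ points, excluding the near-pencil extremal case of de~Bruijn--Erd\H{o}s and yielding $f_1 \geq n + 1 > n = f_0$. For the inductive step, the central tool is the double-counting identity
\[
\sum_{F}\sigma(F) \;=\; \sum_{\Pi}\tau(\Pi),
\]
where $F$ ranges over $(k-1)$-flats spanned by $P$, $\Pi$ over $k$-flats spanned by $P$, $\sigma(F)$ denotes the number of spanned $k$-flats containing $F$, and $\tau(\Pi)$ the number of spanned $(k-1)$-flats contained in $\Pi$.

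For Part~1, take a minimum cover $\mathcal{G} = \{\Gamma_1,\ldots,\Gamma_m\}$ of $P$ with $\sum\dim\Gamma_i = K(P) \leq k$. Every spanned $k$-flat $\Pi$ must arise as the affine span of a sub-collection $\{\Gamma_i \cap \Pi\}_{i \in S}$ whose intersection dimensions exactly fill the dimension budget $k$, which restricts $f_k$ to an explicit short list indexed by such sub-collections. Spanned $(k-1)$-flats arise more abundantly, both from sub-collections that slightly underuse the dimension budget and from the inductive hypothesis applied within each $\Gamma_i$ of dimension at least $2$. A case analysis on the partition $(d_1,\ldots,d_m)$ then delivers $f_{k-1} > f_k$ unless both vanish.

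For Part~2, the crucial starting observation is that $n > g_k$ forces $\sigma(F) \geq 2$ for every spanned $(k-1)$-flat $F$: if $\sigma(F) = 1$ then every $p \in P \setminus F$ satisfies $\langle F, p\rangle = \Pi$ for the unique $k$-flat $\Pi$ through $F$, so $P \subseteq \Pi$ and $K(P) \leq k$, contradicting $n > g_k$. This gives $\sum_\Pi \tau(\Pi) \geq 2 f_{k-1}$. To convert this into $f_k > f_{k-1}$, I would strengthen $\sigma(F) \geq 2$ to a larger lower bound on most $F$, exploiting that a small $\sigma(F)$ confines $P$ to a small union of $k$-flats through $F$ and thus, aggregated across $F$, contradicts $n - g_k > c_k$. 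In parallel, Part~1 of the inductive hypothesis, applied inside each $\Pi$ to the point set $P \cap \Pi$, bounds $\tau(\Pi)$ in terms of the local essential dimension $K_\Pi(P \cap \Pi)$.

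The principal obstacle I foresee is the lack of a monotone relationship between $K(P)$ and the local values $K_\Pi(P \cap \Pi)$: a spanned $k$-flat can intersect $P$ in a highly degenerate set even when $K(P)$ is large, so bounding the ``exceptional'' $k$-flats --- those where $\tau(\Pi)$ is forced to be large --- requires a delicate argument that couples the cover structure from Part~1 with the $g_k$-structure of near-extremal configurations. Managing this interaction and absorbing the exceptional configurations into a sufficiently large constant $c_k$ is the technical crux of Part~2.
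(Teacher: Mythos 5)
Your sketch has genuine gaps in both parts, and in each case the gap sits exactly where the paper has to do its real work. For Part 1, the claim that $K(P)\leq k$ restricts the spanned $k$-flats to ``an explicit short list indexed by sub-collections'' of the cover is false: for two skew lines with $n/2$ points each we have $K=2\leq k=2$, yet $f_2=n$ grows with $n$ (every point of one line together with the other line spans a distinct plane), so neither side of the comparison is a bounded list and a finite case analysis on the partition $(d_1,\ldots,d_m)$ cannot decide $f_{k-1}>f_k$. The paper's proof of claim 1 is instead an induction on the essential dimension $K$ (not on $k$), driven by the bijection between the $k$-flats through a point $p$ and the $(k-1)$-flats spanned by the projection $\pi_p$, together with $K(\pi_p)\leq K-1$; because projection merges points, the statement must be strengthened to a weighted inequality $\sum_{\Lambda\in\mathcal{F}_k}F(W(\Lambda))<\sum_{\Lambda\in\mathcal{F}_{k-1}}F(W(\Lambda))$ for non-increasing $F$ (Lemma \ref{thm:fctFrewrite} and the lemma following it) so that the induction closes. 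Your proposal contains no substitute for this mechanism.

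For Part 2, the double count $\sum_F\sigma(F)=\sum_\Pi\tau(\Pi)$ with $\sigma(F)\geq 2$ only gives $\sum_\Pi\tau(\Pi)\geq 2f_{k-1}$, and this cannot be converted into $f_k>f_{k-1}$ because $\tau(\Pi)$ is unbounded: a single spanned $k$-flat containing $m$ points in general position contains on the order of $m^k$ spanned $(k-1)$-flats. Moreover, Part 1 of your inductive hypothesis applied to $P\cap\Pi$ gives a \emph{lower} bound on $\tau(\Pi)$ relative to the local $f_k$, not the upper bound you would need. You correctly identify this as the crux, but no argument is offered, and a purely lattice-theoretic counting scheme cannot supply one: claim 2 genuinely depends on the underlying field (the $\mathbb{F}_q^d$ example in the introduction violates it), so incidence-geometric input is unavoidable. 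The paper obtains claim 2 as a corollary of the quantitative lower bound $f_k^{\leq c_l}=\Omega\bigl(\prod_{i=0}^k(n-g_i)\bigr)$ (Theorem \ref{th:lowerBound}), proved by induction on $k$ via projection from points outside the cover $\mathcal{G}_k$, using Beck's theorem and the weak Dirac theorem (Theorems \ref{th:BeckErdos}, \ref{th:aveDirac}, both resting on Szemer\'edi--Trotter), with the minimality of $\mathcal{G}_k$ controlling how $g_i$ behaves under projection and the upper bound (Theorem \ref{th:upperBound}) killing high-multiplicity projection points; combined with $f_{k-1}=O\bigl(\prod_{i=0}^{k-1}(n-g_i)\bigr)$ this gives $f_k/f_{k-1}=\Omega(n-g_k)$, which is where the constant $c_k$ actually comes from. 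Your base case via the equality case of de Bruijn--Erd\H{o}s is fine over $\mathbb{R}$, but the inductive step as sketched would not get off the ground.
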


This theorem is a modification of a conjecture of Purdy \cite{erdos1996extremal}.
A counterexample to Purdy's original conjecture for $k \geq 3$ was given by the author, Purdy, and Smith \cite{lund2011bichromatic}; however, this counterexample left open the possibility that some variation on the conjecture (such as Theorem \ref{th:Purdy}) could be true.

The case $k=2$ of Purdy's conjecture was: if $P$ is a set of sufficiently many points, then either $P$ can be covered by two lines, or by a plane and a point, or $P$ spans at least as many planes as lines (i.e., $f_2 \geq f_1$).
This case of the conjecture appears in well-known collections of open problems in combinatorial geometry \cite{brass2005research,croft2012unsolved}, and has remained open until now.
We give counterexamples to this conjecture in section \ref{sec:constructions}, even showing that there are arbitrarily large point sets that cannot be covered by a plane and a point or by two lines such that $f_2 < (5/6)f_1 + O(1)$.

Also in Section \ref{sec:constructions}, we investigate lower bounds on the values that may be taken by $c_k$ in Theorem \ref{th:Purdy}.
In particular, we show that, even if we restrict our attention to arbitrarily large point sets, Theorem \ref{th:Purdy} does not hold for values of $c_2$ less than $4$ or $c_3$ less than $11$, and for larger $k$ we show that $c_k$ grows at least linearly with $k$.
We further give a construction that we conjecture would show that $c_k$ must grow at least exponentially with $k$, if we could properly analyze the construction in high dimensions.

Unlike the theorem of de Bruijn and Erd\H{o}s mentioned above, Theorem \ref{th:Purdy} depends crucially on the underlying field.
For example, consider the set $P$ of all points in $\mathbb{F}_q^{d}$, where $\mathbb{F}_q$ is the finite field with $q$ elements.
The number of $(d-1)$-flats spanned by $P$ is $\Theta(q^{d})$, while the number of $(d-2)$-flats is $\Theta(q^{2(d-1)})$; however, no set of essential dimension $d-1$ contains more than $q^{d-1}$ points.

Other than the result of Purdy for the case $k=2$ mentioned above, the most relevant prior work on this question is a result of Beck \cite{beck1983lattice}, who proved that there is a constant $c_k'$ depending on $k$ such that, either $f_k = \Omega(n^{k+1})$,\footnote{Here, and throughout the paper, the constants hidden by asymptotic notation depend on $k$.} or a single hyperplane contains $c_k' n$ points.
Hence, if $P$ is a set of sufficiently many points, and no hyperplane contains more than a small, constant fraction of the points of $P$, then $f_k \geq f_{k-1}$.
Considering the example of a set $P$ of $n$ points, $n/k$ of which lie on each of $k$ skew lines spanning $\mathbb{R}^{2k-1}$, shows that $c_k'$ must be a decreasing function of $k$ in this theorem.

The second claim (for $n - g_k \geq c_k$) of Theorem \ref{th:Purdy} is a consequence of the following asymptotic expression for the number of $k$-flats spanned by $P$.

\begin{theorem}\label{th:numberOfFlats}
Let $P$ be a set of $n$ points in a finite dimensional real or complex affine geometry.

For $k < K = K(P)$,
\begin{equation}\label{eqn:mainSmallk}
f_k = \Theta \left( \prod_{i=0}^k (n-g_i) \right),\end{equation}
provided that $n-g_k \geq c_k$, for a constant $c_k$ depending only on $k$.

For $k \geq K$,
\begin{equation}\label{eqn:mainLargek}
f_k = O \left(\prod_{i=0}^{2(K-1) - k} (n-g_i) \right).\end{equation}
\end{theorem}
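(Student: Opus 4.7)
The proof is by induction on $k$, anchored by the following structural observation: for any $k$-flat $\Gamma$ spanned by $P$, the set $P\cap\Gamma$ has essential dimension exactly $k$ (it affinely spans $\Gamma$, so $K(P\cap\Gamma)\geq k$, and the single flat $\Gamma$ covers $P\cap\Gamma$ with total dimension $k$), and therefore $|P\cap\Gamma|\leq g_k$.  The analogous bound $|P\cap\Lambda|\leq g_{k-1}$ holds for any spanned $(k-1)$-flat $\Lambda$.

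For the regime $k<K$, the plan is to relate $f_k$ to $f_{k-1}$ via the flag-incidence count
\[
I=\bigl|\{(\Lambda,\Gamma)\,:\,\Lambda\subset\Gamma,\ \Lambda\text{ a }(k{-}1)\text{-flat spanned by }P,\ \Gamma\text{ a }k\text{-flat spanned by }P\}\bigr|.
\]
Counting $I$ by $\Lambda$: each point $p\in P\setminus\Lambda$ determines a unique $\Gamma\supset\Lambda$, and any such $\Gamma$ absorbs at most $g_k-|P\cap\Lambda|$ of these points, so $\#\{\Gamma\supset\Lambda\}$ lies in the interval $[(n-g_{k-1})/g_k,\,n-|P\cap\Lambda|]$.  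Counting $I$ by $\Gamma$: each $\Gamma$ contains between $1$ and $O(g_k^k)$ spanned $(k-1)$-flats.  Combining these with the inductive hypothesis $f_{k-1}=\Theta(\prod_{i=0}^{k-1}(n-g_i))$ yields both the upper and lower bounds up to constant factors; the sharp final factor $(n-g_k)$, rather than the $(n-g_{k-1})$ that the greedy incidence count naturally produces, is obtained by stratifying $k$-flats by $|P\cap\Gamma|$ and showing that the contribution from flats with near-extremal intersection $g_k$ is dominated by that from flats with bounded intersection.  This stratification is where the threshold $n-g_k\geq c_k$ enters.

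For the upper bound when $k\geq K$, I would use a minimal essential-dimension cover $\Gamma_1,\ldots,\Gamma_s$ of $P$ with $\sum\dim\Gamma_j=K$.  A spanned $k$-flat $\Gamma$ is determined by its pattern of intersections with the covering flats: if $S'$ indexes those $\Gamma_j$ entirely contained in $\Gamma$ and $S\supseteq S'$ indexes those that meet $\Gamma$ nontrivially, a dimension count gives $k\geq(|S|-1)+\sum_{j\in S'}\dim\Gamma_j$.  Bounding the number of $k$-flats in each admissible pattern by a product of $(n-g_i)$-type factors (coming from the point choices in the partial intersections) and summing over patterns yields $O(\prod_{i=0}^{2(K-1)-k}(n-g_i))$, with the extremal exponent $2(K-1)-k$ arising from covers consisting entirely of skew lines.

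The main obstacle is precisely the sharp lower bound for $k<K$: a direct incidence count divides by a factor of $g_k^{k+1}$, which can be as large as $n^{k+1}$ when the configuration is heavily structured.  Replacing this loss by the correct factor $(n-g_k)$ requires the stratification above, which exploits the extremal nature of the maximal $g_k$-subset to show that each of the $n-g_k$ ``excess'' points contributes $\Omega(n-g_k)$ genuinely new $k$-flats through a typical spanned $(k-1)$-flat.  This is exactly the inductive step where the constant $c_k$ is essential.
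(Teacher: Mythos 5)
Your proposal has a genuine gap at its core: the lower bound in (\ref{eqn:mainSmallk}). You correctly observe that the naive flag count between spanned $(k-1)$-flats and $k$-flats loses factors of $g_k$ (which can be of order $n$), but the ``stratification by $|P\cap\Gamma|$'' that is supposed to recover the sharp factor $(n-g_k)$ is never actually carried out, and no purely combinatorial version of it can work: the lower bound is specific to real and complex geometry. Over $\mathbb{F}_q^d$ the full point set has $n-g_i=\Omega(n)$ for every $i<d=K$, yet it spans only $\Theta(q^d)$ hyperplanes, far below $\prod_{i=0}^{d-1}(n-g_i)$; so any argument that never invokes a field-specific tool cannot prove (\ref{eqn:mainSmallk}). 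The paper's proof (Theorem \ref{th:lowerBound}) is an induction on $k$ whose base case is Beck's theorem $f_1^{\leq c_b}=\Omega(n(n-g_1))$ (Theorem \ref{th:BeckErdos}), a consequence of Szemer\'edi--Trotter; the inductive step projects from a point $p$ lying outside the maximal subset realizing $g_k$, and the real work (Lemmas \ref{th:minimalityOfG} and \ref{th:projectionOfA}, plus the weak Dirac theorem, Theorem \ref{th:aveDirac}) is to show that the quantities $n-g_i$ survive this projection up to constants; finally the high-multiplicity points of the projection are controlled using the upper bound theorem, so that the flats produced contain $O(1)$ points of $P$ and the incidence count can be divided by a constant. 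None of this machinery, and in particular no Szemer\'edi--Trotter input, appears in your sketch, so the heart of the theorem is missing; your closing paragraph essentially concedes this.

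The upper bound also needs more than your flag count provides. Counting by $\Lambda$ gives at most $n$ (not $n-g_k$) spanned $k$-flats $\Gamma\supset\Lambda$, so that route yields only $f_k=O\bigl(n\prod_{i=0}^{k-1}(n-g_i)\bigr)$, which is much weaker than (\ref{eqn:mainSmallk}) precisely when $n-g_k=O(1)$, the regime relevant to Theorem \ref{th:Purdy}. The paper instead partitions $P$ into $P_1$ (the points covered by $\mathcal{G}_k$) and $P_2$ with $|P_2|=n-g_k$, uses the injection giving $f_k\leq\sum_{i}f_i(P_1)f_{k-i-1}(P_2)$, bounds $f_{k-i-1}(P_2)\leq(n-g_k)^{k-i}$ trivially, and, crucially, handles the $i=k$ term by applying the $k\geq K$ bound (\ref{eqn:mainLargek}) to $P_1$, which has $K(P_1)\leq k$; that is where the exponent $2(K-1)-k$ earns its keep. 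Your sketch for $k\geq K$ (intersection patterns with a minimal cover) is in the right spirit and parallels the paper's Lemma \ref{th:containedFlats}, but the step ``bounding the number of $k$-flats in each admissible pattern by a product of $(n-g_i)$-type factors'' is asserted rather than proved; in the paper it is obtained by projecting from the span of the cover flats contained in $\Gamma$ (Lemma \ref{th:boundingByProjection}) and inducting on the essential dimension of the projected set.
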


Claim $2$ of Theorem \ref{th:Purdy} is an immediate consequence of expression (\ref{eqn:mainSmallk}) in Theorem \ref{th:numberOfFlats}.
Theorem \ref{th:numberOfFlats} is also a substantial generalization of a conjecture made by the author, Purdy, and Smith \cite{lund2011bichromatic}.

Recently, Do \cite{do2016extending} independently found a different proof a special case of (\ref{eqn:mainSmallk}).
In particular, Do shows that if $n-g_k = \Omega(n)$ then $f_k = \Omega(n^{k+1})$, for suitable choices of the implied constants.

Theorem \ref{th:numberOfFlats} additionally implies an asymptotic version of a special case of a long-standing conjecture in matroid theory.
Rota \cite{gian1971combinatorial} conjectured that the sequence of the number of flats of each rank in any geometric lattice is unimodal, and Mason \cite{mason1972matroids} proposed the stronger conjecture that the sequence is log-concave.
We have
\begin{corollary}\label{th:unimodal}
For $k < K$ such that $n-g_k \geq c_k$,
$$f_k^2 = \Omega(f_{k-1}f_{k+1}).$$
\end{corollary}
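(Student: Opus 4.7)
The plan is to substitute the asymptotic bounds from Theorem~\ref{th:numberOfFlats} directly into the ratio $f_k^2/(f_{k-1}f_{k+1})$ and verify that the ratio is bounded away from zero.

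First I would observe that the sequence $i \mapsto g_i$ is nondecreasing: any cover of $P' \subseteq P$ by flats of total dimension at most $i$ may be promoted to one of total dimension at most $i+1$ by enlarging a single flat of the cover to contain one additional direction. Hence $n - g_{k-1} \ge n - g_k \ge c_k$. Replacing each $c_j$ by $\max_{i \le j} c_i$ if necessary, we may assume the constants $c_i$ are nondecreasing, so the proviso of Theorem~\ref{th:numberOfFlats} also holds at index $k-1$. Applying (\ref{eqn:mainSmallk}) at indices $k-1$ and $k$ gives
\[
f_k = \Theta\!\left(\prod_{i=0}^{k}(n-g_i)\right), \qquad
f_{k-1} = \Theta\!\left(\prod_{i=0}^{k-1}(n-g_i)\right).
\]

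Next I would upper-bound $f_{k+1}$ according to which regime $k+1$ lies in. If $k+1 < K$ and $n - g_{k+1} \ge c_{k+1}$, (\ref{eqn:mainSmallk}) yields $f_{k+1} = O\!\bigl(\prod_{i=0}^{k+1}(n-g_i)\bigr)$, and a direct calculation gives
\[
\frac{f_k^2}{f_{k-1}\,f_{k+1}} \;=\; \Omega\!\left(\frac{n-g_k}{n-g_{k+1}}\right) \;=\; \Omega(1),
\]
where the last step uses $g_{k+1} \ge g_k$. If instead $k+1 = K$, then (\ref{eqn:mainLargek}) applied at $K$ gives $f_{k+1} = O\!\bigl(\prod_{i=0}^{K-2}(n-g_i)\bigr) = O\!\bigl(\prod_{i=0}^{k-1}(n-g_i)\bigr)$, from which
\[
\frac{f_k^2}{f_{k-1}\,f_{k+1}} \;=\; \Omega\!\left((n-g_k)^2\right) \;=\; \Omega(1).
\]

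The one residual case — $k+1 < K$ but $n - g_{k+1} < c_{k+1}$ — is where I expect the main obstacle to lie, since the hypothesis of the corollary does not guarantee the proviso at level $k+1$, and so (\ref{eqn:mainSmallk}) is not immediately available. The cleanest resolution is to note that the upper bound half of (\ref{eqn:mainSmallk}) in fact holds without the proviso: inspecting the proof of Theorem~\ref{th:numberOfFlats}, the proviso is required only to secure the matching $\Omega$ lower bound, so the $O$ bound on $f_{k+1}$ remains valid and the first computation above goes through verbatim. Failing that, one can exploit the fact that in this regime all but a bounded number of points of $P$ lie in a subset of essential dimension at most $k+1$, and bound $f_{k+1}$ by $O(f_k)$ through an incidence-counting argument relating $(k+1)$-flats to $k$-flats (each $(k+1)$-flat is determined by a $k$-flat together with a point extending it, and either that extending point is one of the $O(1)$ exceptional points or the configuration is so degenerate that a direct count suffices).
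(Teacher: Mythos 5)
Your proposal is correct and takes essentially the same route as the paper, whose proof is just to plug the bounds of Theorem \ref{th:numberOfFlats} into the ratio and use the monotonicity $n-g_i \leq n-g_{i-1}$. The residual case you flag is resolved exactly as your preferred option anticipates: the upper bounds are stated separately as Theorem \ref{th:upperBound} with no proviso whatsoever, so the $O$ bounds on $f_{k-1}$ and $f_{k+1}$ are unconditional and no fallback incidence argument is needed.
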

This follows immediately from Theorem \ref{th:numberOfFlats} and the easy observation that $n-g_i \leq n-g_{i-1}$ for any $i$.
Note that Corollary \ref{th:unimodal} applies only to real or complex affine geometries, and is also weaker than Rota's conjecture due the additional assumptions on $P$ and the implied constant in the asymptotic notation.

We remark that the assumption that the underlying field is either the real or complex numbers is only used for the lower bound of Theorem \ref{th:numberOfFlats}; the proofs of claim 1 of Theorem \ref{th:Purdy} and the upper bound of Theorem \ref{th:numberOfFlats} are independent of this assumption.
Claim 2 of Theorem \ref{th:Purdy} and Corollary \ref{th:unimodal} both rely on the lower bound of Theorem \ref{th:numberOfFlats}, and hence are proved only for real and complex geometry.

\subsection{Organization of the paper}
Section \ref{sec:preliminaries} reviews basic facts of projective geometry and defines notation.
Section \ref{sec:claim1} gives the proof of claim 1 of Theorem \ref{th:Purdy}.
Section \ref{sec:upperBound} gives the proof  of the upper bound of Theorem \ref{th:numberOfFlats}.
Section \ref{sec:knownResults} reviews some well-known consequences of the Szemer\'edi-Trotter theorem.
Section \ref{sec:lowerBound} gives the proof of the lower bound of Theorem \ref{th:numberOfFlats}.
Section \ref{sec:constructions} describes several new infinite families of point sets, that disprove Purdy's conjecture for $\mathbb{R}^3$, and establish lower bounds on the values that could be assumed by the constant $c_k$ in Theorem \ref{th:Purdy}.

\subsection{Acknowledgements}

I thank George Purdy for suggesting the problem, and Abdul Basit, Zoltan Kiraly, Joe Malkevich, George Purdy, and Justin Smith for various helpful conversations and suggestions.

\section{Preliminaries}\label{sec:preliminaries}

In section \ref{sec:projection}, we review some basic facts of projective geometry, and fix the relevant notation.
In section \ref{sec:context}, we define some basic constructions, and list consisely the notation used for these constructions.

\subsection{Projection}\label{sec:projection}

It suffices to prove Theorems \ref{th:Purdy} and \ref{th:numberOfFlats} for sets of points in a finite dimensional projective geometry.
Indeed, given a set of points in an affine geometry, we can add an empty hyperplane at infinity to obtain points in a projective geometry that determine the same lattice of flats.

In this section, we fix notation and review basic facts about projective geometry that we rely on in the proofs.

We denote by $\mathbb{P}^d$ the $d$-dimensional projective geometry over either $\mathbb{R}$ or $\mathbb{C}$.
We refer to projective subspaces of $\mathbb{P}^d$ as flats.

The span of a set $X \subset \mathbb{P}^d$ is the smallest flat that contains $X$, and is denoted $\overline{X}$.
Let $\Lambda, \Gamma$ be flats of $\mathbb{P}^d$.
We denote by $\overline{\Lambda,\Gamma}$ the span of $\Lambda \cup \Gamma$.
It is a basic fact of projective geometry that
\begin{equation}\label{eqn:dimSpan} \dim(\overline{\Lambda,\Gamma})  + \dim(\Lambda \cap \Gamma) = \dim(\Lambda) + \dim(\Gamma).\end{equation}
Recall that $\dim(\emptyset) = -1$.

For a $k$-flat $\Lambda$, we define the projection from $\Lambda$ to be the map
$$\pi_{\Lambda} : \mathbb{P}^{d} \setminus \Lambda \rightarrow \mathbb{P}^{d-k-1}$$
that sends a point $p$ to the intersection of the $(k+1)$-flat $\overline{p,\Lambda}$ with an arbitrary $(d-k-1)$-flat disjoint from $\Lambda$.
With a slight abuse of notation, for any set $X \subseteq \mathbb{P}^d$, we define $\pi_\Lambda(X)$ to be the image of $X \setminus \Lambda$ under projection from $\Lambda$.

For example, let $\Lambda, \Gamma$ be flats in $\mathbb{P}^d$ of dimensions $k$ and $k'$, respectively.
Then, $\pi_{\Lambda}(\Gamma)$ is defined to be the intersection of $\overline{\Gamma, \Lambda}$ with a $(d-k-1)$-flat $\Sigma$ such that $\Lambda \cap \Sigma = \emptyset$.
Together with equation (\ref{eqn:dimSpan}), this implies that $\dim(\overline{\Sigma,\Lambda})=d$, and, since we are in $\mathbb{P}^d$, we have also that $\dim(\overline{\Sigma, \Lambda, \Gamma})=d$.
Applying (\ref{eqn:dimSpan}) two more times, we have
\begin{align*}
\dim(\Sigma \cap \overline{\Lambda,\Gamma}) &= \dim(\Sigma) + \dim(\overline{\Lambda,\Gamma}) - \dim(\overline{\Lambda,\Gamma,\Sigma}), \\
&= \dim(\Sigma) + \dim(\Lambda) + \dim(\Gamma) - \dim(\Lambda \cap \Gamma) - \dim(\overline{\Lambda,\Gamma,\Sigma}), \\
&= k' - 1 - \dim(\Lambda \cap \Gamma).
\end{align*}
In other words, the projection of a $k'$-flat through a $k$-flat in $\mathbb{P}^d$ is a $(k' - 1 -\dim(\Lambda \cap \Gamma))$-flat in $\mathbb{P}^{d-k-1}$, and so

\begin{equation}\label{eqn:dimProj}\dim(\pi_\Lambda(\Gamma))  = \dim(\Gamma) - 1 - \dim(\Gamma \cap \Lambda).\end{equation}

\subsection{Context and notation}\label{sec:context}

For the remainder of the paper, we fix a point set $P$ of size $|P| = n$ in a finite dimensional real or complex projective space.

Recall that the \textit{essential dimension} $K(Q)$ of a set $Q$ of points is the minimum $t$ such that there exists a set of flats, each of dimension $1$ or more, the union of which contains $Q$, and whose dimensions sum to $t$.
The proofs in sections \ref{sec:upperBound} and \ref{sec:lowerBound} proceed primarily by isolating maximum size subsets of $P$ having specified essential dimension.

We define $g_k(Q)$ to be the maximum size of a subset $Q' \subseteq Q$ such that $K(Q') \leq k$.
We define $\mathcal{G}_k(Q)$ as a set of flats that satisfies the following conditions:
\begin{enumerate}
\item each flat in $\mathcal{G}_k$ has dimension at least $1$,
\item $\sum_{\Gamma \in \mathcal{G}_k} \dim(\Gamma) \leq k$,
\item $|\cup_{\Gamma \in \mathcal{G}_k} \Gamma \cap Q| = g_k$,
\item $|\mathcal{G}_k| \leq |\mathcal{G}_k'|$ for any set $\mathcal{G}_k'$ that satisfies conditions 1,2, and 3.
\end{enumerate}
In other words, $\mathcal{G}_k(Q)$ is a set of flats of minimum cardinality that contains a maximum cardinality set $Q' \subset Q$  with essential dimension $K(Q') \leq k$.

We further define the following functions on any point set $Q$:

\begin{tabularx}{\textwidth}{l X}
$f_k(Q)$ & the number of $k$-flats spanned by $Q$, \\
$\mathcal{F}_k(Q)$ & the set of $k$-flats spanned by $Q$, \\
$f_k^{\sigma c}(Q)$ & for $\sigma \in \{\leq, =, \geq\}$; the number of $k$-flats spanned by $Q$ that each contain at most / exactly / at least $c$ points of $Q$, \\
$\mathcal{F}^{\sigma c}_k(Q)$ & the set of flats counted by $f_k^{\sigma c}(Q)$, \\
$\mathcal{G}(Q)$ & $\mathcal{G}_{K(Q)}(Q)$. \\
\end{tabularx}

The argument to any one of these functions will be omitted when it is clear from the context, in which case the argument will most often be $P$.
This also applies to the projection operations described in section \ref{sec:projection}; for example, $\pi_\Gamma$ is shorthand for $\pi_\Gamma(P)$, and denotes the projection of $P$ from $\Gamma$.

Given a point set $Q$ and a set of flats $\mathcal{F}$, we define the number of incidences between $Q$ and $\mathcal{F}$ as
$$I(Q,\mathcal{F}) = |\{(p,\Gamma) \in Q \times \mathcal{F} \mid p \in \Gamma \}|.$$

\section{Claim 1 of Theorem \ref{th:Purdy}}\label{sec:claim1}

In this section, we establish claim 1 of Theorem \ref{th:Purdy}.

The results in this section are for weighted points.
In particular, we assume the existence of a function $W:P \rightarrow \mathbb{R}$ such that $W(p) \geq 1$ for all $p \in P$.

Given such a weight function on the points of $P$, we extend it to flats and define related weight functions for projections of $P$ as follows.
The weight of a flat $\Lambda$ is
$$W(\Lambda) = \sum_{p \in P \cap \Lambda} W(p).$$
The weight of a point $q \in \pi_\Gamma$ is
$$W_\Gamma(q) = \sum_{\substack{p \in P \mid  \\ \pi_{\Gamma}(p) = q}} W(p).$$
Note that, for any flat $\Gamma$, we have
$$\sum_{q \in \pi_\Gamma} W_\Gamma(q) + W(\Gamma) = \sum_{p \in P} W(p).$$

The following simple lemma shows how to rewrite the sum of a function of the weights of the flats spanned by $P$ in terms of the flats projected from each point $p \in P$.

\begin{lemma}\label{thm:fctFrewrite}
For any function $F$ and $k \geq 1$,
$$\sum_{\Lambda \in \mathcal{F}_k} F(W(\Lambda)) = \sum_{p \in P} \sum_{\Lambda \in \mathcal{F}_{k-1}(\pi_p)} \frac{W(p) \cdot F(W_p(\Lambda) + W(p))}{W_p(\Lambda) + W(p)}.$$
\end{lemma}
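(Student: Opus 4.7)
The plan is to prove the identity by counting pairs $(p,\Lambda)$ with $p\in P$, $\Lambda\in\mathcal{F}_k$, $p\in\Lambda$ in two ways, using the projection from a point to establish a bijection between the $(k-1)$-flats spanned by $\pi_p$ and the $k$-flats spanned by $P$ that pass through $p$.

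First I would establish the bijection. Given $p\in P$ and a $k$-flat $\Lambda\in\mathcal{F}_k$ with $p\in\Lambda$, formula (\ref{eqn:dimProj}) gives $\dim(\pi_p(\Lambda))=k-1$, and since $\Lambda$ contains $k+1$ points of $P$ in general position, $k$ of which are distinct from $p$, their images under $\pi_p$ are $k$ points of $\pi_p$ in general position, so $\pi_p(\Lambda)\in\mathcal{F}_{k-1}(\pi_p)$. Conversely, given $\Lambda'\in\mathcal{F}_{k-1}(\pi_p)$, the lift $\overline{p,\Lambda'}$ is a $k$-flat spanned by $P$ containing $p$ (using $k$ preimage points in $P\setminus\{p\}$ of a spanning set of $\Lambda'$, together with $p$). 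These two maps are mutually inverse.

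Next I would show that under this bijection, $W_p(\Lambda')+W(p)=W(\Lambda)$. By definition, $W_p(\Lambda')=\sum_{q\in\Lambda'\cap\pi_p}W_p(q)=\sum_{r\in P\setminus\{p\},\,\pi_p(r)\in\Lambda'}W(r)$, and the points $r\in P\setminus\{p\}$ with $\pi_p(r)\in\Lambda'$ are exactly the points of $(P\cap\Lambda)\setminus\{p\}$, since $\pi_p(r)\in\Lambda'$ iff $r\in\overline{p,\Lambda'}=\Lambda$. Adding $W(p)$ gives $W(\Lambda)$.

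Finally I would swap the order of summation on the right-hand side. Using the bijection,
\begin{align*}
\sum_{p\in P}\sum_{\Lambda'\in\mathcal{F}_{k-1}(\pi_p)}\frac{W(p)\cdot F(W_p(\Lambda')+W(p))}{W_p(\Lambda')+W(p)}
&=\sum_{\Lambda\in\mathcal{F}_k}\sum_{p\in\Lambda\cap P}\frac{W(p)\cdot F(W(\Lambda))}{W(\Lambda)}\\
&=\sum_{\Lambda\in\mathcal{F}_k}\frac{F(W(\Lambda))}{W(\Lambda)}\sum_{p\in\Lambda\cap P}W(p)\\
&=\sum_{\Lambda\in\mathcal{F}_k}F(W(\Lambda)),
\end{align*}
using $\sum_{p\in\Lambda\cap P}W(p)=W(\Lambda)$ in the last step. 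The only subtle point is verifying the bijection carefully—in particular that every $(k-1)$-flat spanned by the projection actually lifts to a $k$-flat spanned by $P$, and vice versa—but this follows directly from (\ref{eqn:dimProj}) and the definition of ``spanned.''
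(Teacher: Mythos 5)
Your proof is correct and is essentially the same argument as the paper's: a double count of incident pairs $(p,\Lambda)$ with a swap of the order of summation, using the bijection between $k$-flats through $p$ and $(k-1)$-flats spanned by $\pi_p$ together with the identity $W_p(\Lambda')+W(p)=W(\Lambda)$. You simply run the computation from right to left and spell out the bijection and weight correspondence that the paper states as an observation.
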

\begin{proof}
\begin{align*}
\sum_{\Lambda \in \mathcal{F}_k} F(W(\Lambda)) &= \sum_{\Lambda \in \mathcal{F}_k} F(W(\Lambda)) \sum_{p \in P \cap \Lambda} \frac{W(p)}{W(\Lambda)}, \\
&= \sum_{p \in P} \sum_{\Lambda \mid p \in \Lambda} \frac{W(p)F(W(\Lambda))}{W(\Lambda)}, \\
&= \sum_{p \in P} \sum_{\Lambda \in \mathcal{F}_{k-1}(\pi_p)} \frac{W(p) \cdot F(W(\Lambda) + W(p))}{W(\Lambda) + W(p)}.
\end{align*}
The last line uses the observation that the $k$-flats spanned by $P$ and incident to $p$ are in bijection with the $(k-1)$-flats spanned by $\pi_p$.
\end{proof}

The following lemma is the main claim of the section, from which claim 1 of Theorem \ref{th:Purdy} follows easily.
We write $\mathbb{R}^+$ for the set of strictly positive real numbers.

\begin{lemma}
Let $F:\mathbb{R}^+ \rightarrow \mathbb{R}^+$ be a non-increasing function.
Let $k \geq K$, with $f_k \geq 1$.
Then,
$$\sum_{\Lambda \in \mathcal{F}_k} F(W(\Lambda)) < \sum_{\Lambda \in \mathcal{F}_{k-1}} F(W(\Lambda)).$$
\end{lemma}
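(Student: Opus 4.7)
The plan is to proceed by induction on $k$. For the base case $k=1$, the hypotheses $K \le 1$ and $f_1 \ge 1$ force $P$ to lie on a single line $\ell$ with $|P| \ge 2$; since $W(p) \le W(\ell)$ and $F$ is non-increasing, every summand of $\sum_{\Lambda \in \mathcal{F}_0} F(W(\Lambda)) = \sum_{p \in P} F(W(p))$ is at least $F(W(\ell)) = \sum_{\Lambda \in \mathcal{F}_1} F(W(\Lambda))$, and $|P| \ge 2$ makes the inequality strict.

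For the inductive step ($k \ge 2$), I would apply Lemma~\ref{thm:fctFrewrite} to both sides. Let $F'_p(x) := W(p) F(x + W(p))/(x + W(p))$; it is easily checked to be positive and non-increasing on $\mathbb{R}^+$, while $W_p$ satisfies $W_p(q) \ge 1$ for every $q \in \pi_p$. The lemma then yields
$$\sum_{\Lambda \in \mathcal{F}_k} F(W(\Lambda)) = \sum_{p \in P} \sum_{\Lambda' \in \mathcal{F}_{k-1}(\pi_p)} F'_p(W_p(\Lambda')),$$
and analogously $\sum_{\Lambda \in \mathcal{F}_{k-1}} F(W(\Lambda)) = \sum_p \sum_{\Lambda' \in \mathcal{F}_{k-2}(\pi_p)} F'_p(W_p(\Lambda'))$. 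It therefore suffices to prove the per-$p$ bound $\sum_{\mathcal{F}_{k-1}(\pi_p)} F'_p \le \sum_{\mathcal{F}_{k-2}(\pi_p)} F'_p$ with strict inequality for at least one $p$; this is precisely the inductive hypothesis applied to $(\pi_p, W_p, F'_p)$ at level $k-1$, provided $k - 1 \ge K(\pi_p)$.

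To verify $K(\pi_p) \le k - 1$, an essential cover $\mathcal{G}$ of $P$ gives a candidate cover of $\pi_p$ via (\ref{eqn:dimProj}): flats $\Gamma \in \mathcal{G}$ with $p \notin \Gamma$ project to flats of the same dimension; flats of dimension $\ge 2$ containing $p$ drop one dimension; and $1$-flats through $p$ collapse to a single point, which can be absorbed into a neighboring projected flat at an additional cost of at most one dimension. The bookkeeping yields $K(\pi_p) \le K$ in all cases, and $K(\pi_p) \le K - 1$ whenever $p$ lies on some flat of $\mathcal{G}$ of dimension $\ge 2$. Thus when $k > K$, every $p$ satisfies $K(\pi_p) \le K \le k - 1$, the induction closes, and strict inequality follows because $f_k \ge 1$ forces $f_{k-1}(\pi_p) \ge 1$ for any $p$ on a $k$-flat of $P$, triggering a strict inner inequality for that $p$.

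The main obstacle is the tight case $k = K$: a point $p$ lying only on $1$-flats of $\mathcal{G}$ may have $K(\pi_p) = K$, outside the scope of the inductive hypothesis. I would handle this by splitting on $\dim \overline{\mathcal{G}}$. If $\dim \overline{\mathcal{G}} = K$, then $P$ lies in $\overline{\mathcal{G}}$, which must coincide with the unique $K$-flat $\overline{P}$, so $f_K = 1$; since $K(P) = K$ prevents $P$ from lying on any $(K-1)$-flat, picking $K+1$ affinely independent points in $P$ produces $K+1 \ge 2$ distinct $(K-1)$-subflats in $\mathcal{F}_{K-1}$, each of weight at most $W(\overline{P})$, so the inequality is immediate. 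If $\dim \overline{\mathcal{G}} > K$, a rigidity argument using (\ref{eqn:dimSpan}) inside each $\Lambda \in \mathcal{F}_K$ shows that $\Lambda$ must fully contain a subfamily of $\mathcal{G}$ whose dimensions already sum to $K$, while meeting every remaining flat of $\mathcal{G}$ (necessarily a $1$-flat) in at most a single point of $P$; the resulting controlled structure produces many $(K-1)$-``cross-subflats'' inside each $\Lambda$, each of weight at most $W(\Lambda)$, and a direct weighted-incidence argument generalizing the two-skew-lines computation dominates $\sum_{\mathcal{F}_K} F(W)$ by $\sum_{\mathcal{F}_{K-1}} F(W)$.
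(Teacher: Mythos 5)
Your use of Lemma \ref{thm:fctFrewrite}, the base case, and the case $k>K$ are fine, and you have correctly located the delicate point: when $p$ lies only on a $1$-flat of $\mathcal{G}$, the projected cover contains a $0$-dimensional piece, so naive bookkeeping gives only $K(\pi_p)\le K$, and your inductive hypothesis at level $k-1$ is unavailable exactly when $k=K$. But your resolution of that tight case is not a proof. The structural claim it rests on is false: take $P$ on two skew lines $\ell_1,\ell_2$ in $\mathbb{P}^3$, so $K=2$, $\mathcal{G}=\{\ell_1,\ell_2\}$ and $\dim\overline{\mathcal{G}}=3>K$; a spanned plane $\Lambda\in\mathcal{F}_2$ contains $\ell_1$ and a single point of $\ell_2$, so the subfamily of $\mathcal{G}$ fully contained in $\Lambda$ has total dimension $1$, not $K$. (What is actually available is only Lemma \ref{th:containedFlats}: at least $k+1-K=1$ flat of $\mathcal{G}$ lies in $\Lambda$.) Moreover the concluding ``direct weighted-incidence argument generalizing the two-skew-lines computation'' is never specified: one must exhibit, for each $\Lambda\in\mathcal{F}_K$, enough $(K-1)$-flats of $\mathcal{F}_{K-1}$ of weight at most $W(\Lambda)$ \emph{and} control how these are shared among different $\Lambda$'s inside the two weighted sums; this double-counting problem is precisely what the per-point identity of Lemma \ref{thm:fctFrewrite} is designed to absorb, and you abandon that machinery at the only place it is genuinely needed. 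So the case $k=K$ with $\dim\overline{\mathcal{G}}>K$ --- which carries essentially all the content of the lemma --- remains unproved. (Your other subcase, $\dim\overline{\mathcal{G}}=K$, is handled correctly.)

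For comparison, the paper organizes the induction on the essential dimension $K$ rather than on $k$ (base case: $P$ collinear), and for every $p\in P$ applies the inductive hypothesis, for sets of smaller essential dimension and \emph{arbitrary} flat-dimension, to $(\pi_p,W_p,F_p)$ at level $k-1$, invoking the claim $K(\pi_p)\le K-1$; with that organization no separate tight case $k=K$ ever appears. The subtlety you flag (a line of $\mathcal{G}$ through $p$ collapsing to a point under $\pi_p$) is exactly what that projection claim has to absorb, so your instinct about where the difficulty sits is sound; but the viable route is to fold it into the induction --- either by inducting on $K$ as the paper does, or by strengthening the statement you induct on so that it also covers sets of the form ``union of flats of total dimension $K-1$ plus one extra point,'' which is what $\pi_p$ looks like in the problematic situation --- rather than attempting a direct structural analysis of $\mathcal{F}_K$.
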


Note that the conclusion $f_k < f_{k-1}$ follows by taking $F$ to be the function that takes constant value $1$.

\begin{proof}
We proceed by induction on $K$.
In the base case, $P$ is a collinear set of at least $2$ points. Hence, for an arbitrary $p \in P$, we have 
$$\sum_{\Lambda \in \mathcal{F}_1} F(W(\Lambda)) = F(W(P)) \leq F(W(p)) < \sum_{q \in P} F(W(q)),$$
which establishes the claim.

Now, assume that the lemma holds for $K'<K$ and arbitrary $k$.
By Lemma \ref{thm:fctFrewrite}, we have
\begin{equation}\label{eqn:FWLamb}\sum_{\Lambda \in \mathcal{F}_j} F(W(\Lambda)) = \sum_{p \in P} \sum_{\Lambda \in \mathcal{F}_{j-1}(\pi_p)} \frac{W(p) \cdot F(W_p(\Lambda) + W(p))}{W_p(\Lambda) + W(p)},\end{equation}
for each of $j=k$ and $j=k-1$.

Clearly, $K(\pi_p) \leq K - 1$.
Indeed, let $p \in \Gamma \in \mathcal{G}$.
Then $\pi_p$ is contained in the union of $\pi_p(\Gamma)$ and $\pi_p(\Gamma')$ for $\Gamma' \in \mathcal{G} \setminus \Gamma$.
Since $\dim(\pi_p(\Gamma)) = \dim(\Gamma) - 1$, this provides a witness that $K(\pi_p) \leq K-1$.

Fix $p \in P$, and let
\begin{equation*}F_p(w) = \frac{W(p)F(w+W(p))}{w+W(p)},\end{equation*}
defined for positive $w$.
Since $F$ is positive valued and nonincreasing, and $W(p) \geq 1$, we have that $F_p$ is positive valued and nonincreasing.
Hence, the induction hypothesis implies that
\begin{equation}\label{eqn:FWind}\sum_{\Lambda \in \mathcal{F}_{k-1}(\pi_p)} F_p(W_p(\Lambda)) < \sum_{\Lambda \in \mathcal{F}_{k-2}(\pi_p)} F_p(W_p(\Lambda)).\end{equation}
Together, (\ref{eqn:FWLamb}) and (\ref{eqn:FWind}) imply the conclusion of the lemma.
\end{proof}

\section{Upper bound of Theorem \ref{th:numberOfFlats}}\label{sec:upperBound}

The main result of this section is Theorem \ref{th:upperBound}, which is the upper bound of Theorem \ref{th:numberOfFlats}.
Before proving the main result, we establish two lemmas on the set of $k$-flats spanned by $P$, for $k \geq K$.

\begin{lemma}\label{th:containedFlats}
Let $k \geq K$, and let $\Gamma \in \mathcal{F}_k$.
Then, there is a set $\mathcal{A} \subseteq \mathcal{G}$ of $|\mathcal A| = k + 1 - K$ flats such that $\Lambda \subseteq \Gamma$ for each $\Lambda \in \mathcal{A}$.
\end{lemma}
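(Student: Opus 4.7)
The plan is to split $\mathcal{G}$ into the flats contained in $\Gamma$ and those not contained in $\Gamma$, and then use the fact that $\Gamma$ is spanned by $P \cap \Gamma$ to derive a dimension inequality that forces the first part to be large. Write $\mathcal{A} = \{\Lambda \in \mathcal{G} : \Lambda \subseteq \Gamma\}$ and $\mathcal{B} = \mathcal{G} \setminus \mathcal{A}$. Since $k \geq K = K(P)$, we have $g_K = n$, so the flats in $\mathcal{G}$ cover every point of $P$; in particular every point of $P \cap \Gamma$ lies in some $\Lambda \in \mathcal{A}$ or in $\Lambda \cap \Gamma$ for some $\Lambda \in \mathcal{B}$. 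Restricting $\mathcal{B}$ to its subset $\mathcal{B}' = \{\Lambda \in \mathcal{B} : \Lambda \cap \Gamma \neq \emptyset\}$ (since empty intersections contribute nothing) gives
\[
P \cap \Gamma \; \subseteq \; \bigcup_{\Lambda \in \mathcal{A}} \Lambda \;\cup\; \bigcup_{\Lambda \in \mathcal{B}'} (\Lambda \cap \Gamma).
\]

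The next step is to bound the dimension of the span of the right-hand side. Iterating the inequality $\dim(\overline{X,Y}) \leq \dim(X) + \dim(Y) + 1$ (which follows from equation~(\ref{eqn:dimSpan}) with $\dim(\emptyset) = -1$) over the $|\mathcal{A}| + |\mathcal{B}'|$ flats above gives that the span has dimension at most
\[
\sum_{\Lambda \in \mathcal{A}} \dim(\Lambda) \;+\; \sum_{\Lambda \in \mathcal{B}'} \dim(\Lambda \cap \Gamma) \;+\; |\mathcal{A}| + |\mathcal{B}'| - 1.
\]
For each $\Lambda \in \mathcal{B}'$, $\Lambda \cap \Gamma$ is a nonempty proper subflat of $\Lambda$, so $\dim(\Lambda \cap \Gamma) \leq \dim(\Lambda) - 1$. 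Since $\Gamma$ is spanned by $P \cap \Gamma$, its dimension $k$ is bounded by the quantity above, and substituting the bound on $\dim(\Lambda \cap \Gamma)$ and using $\sum_{\Lambda \in \mathcal{G}} \dim(\Lambda) = K$ yields
\[
k \;\leq\; \sum_{\Lambda \in \mathcal{A} \cup \mathcal{B}'} \dim(\Lambda) - |\mathcal{B}'| + |\mathcal{A}| + |\mathcal{B}'| - 1 \;\leq\; K + |\mathcal{A}| - 1,
\]
so $|\mathcal{A}| \geq k + 1 - K$. Choosing any $k+1-K$ elements of $\mathcal{A}$ produces the required subset.

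The only point that needs mild care is the case where some $\Lambda \in \mathcal{B}$ is disjoint from $\Gamma$; handling this by passing to $\mathcal{B}'$ is straightforward. The main conceptual ingredient is simply recognizing that each $\Lambda \in \mathcal{B}$ can only contribute a proper subflat of $\Lambda$ to the part of $\Gamma$ spanned by $P$, which is exactly the source of the $-|\mathcal{B}'|$ term that cancels against $+|\mathcal{B}'|$ and leaves the clean bound $k \leq K + |\mathcal{A}| - 1$.
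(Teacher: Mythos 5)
Your proof is correct and follows essentially the same route as the paper: you intersect the covering flats of $\mathcal{G}$ with $\Gamma$, apply the iterated bound $\dim(\overline{X,Y}) \leq \dim(X)+\dim(Y)+1$ to the span of the resulting flats, and observe that each flat not contained in $\Gamma$ loses at least one dimension upon intersection, which yields $k \leq K + |\mathcal{A}| - 1$. Your explicit split into $\mathcal{A}$, $\mathcal{B}'$ is just a different bookkeeping of the paper's per-flat bound $\dim(\Lambda\cap\Gamma)+1-\dim(\Lambda) \leq 1$ (with $\leq 0$ for proper intersections), so there is nothing further to add.
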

\begin{proof}
We first show that
\begin{equation}\label{eqn:dimUpperBoundOnUnion}\dim(\overline{\mathcal A}) \leq - 1 + \sum_{\Lambda \in \mathcal A} (\dim(\Lambda)+1).\end{equation}
We proceed by induction on $|\mathcal{A}|$.
In the base case, $|\mathcal{A}| = 1$ and the claim holds.
Suppose that $|\mathcal{A}| > 1$, and choose $\Lambda \in \mathcal{A}$ arbitrarily.
By equation (\ref{eqn:dimSpan}),
\begin{align*}
\dim(\overline{\mathcal{A}}) &= \dim(\Lambda) + \dim(\overline{\mathcal{A} \setminus \Lambda}) - \dim(\Lambda \cap \overline{\mathcal{A} \setminus \Lambda}), \\
&\leq \dim(\Lambda) +  \dim(\overline{\mathcal{A} \setminus \Lambda}) + 1.
\end{align*}
The claim follows by the inductive hypothesis.

Let $\mathcal{A} \subseteq \mathcal{G}$ be the set of flats in $\mathcal{G}$ that are contained by $\Gamma$.
We will show that $|\mathcal{A}| \geq k+1-K$.

Denote
$$\mathcal G_\Gamma = \{\Lambda \cap \Gamma \mid \Lambda \in \mathcal{G}\}.$$
Since each point of $P$ is contained in some flat of $\mathcal G$, we have $\Gamma = \overline{\mathcal{G}_\Gamma}$.
By (\ref{eqn:dimUpperBoundOnUnion}),
\begin{equation}\label{eqn:lemma5consequence}
k = \dim(\overline{\mathcal{G}_\Gamma}) \leq - 1 + \sum_{\Delta \in \mathcal{G}_\Gamma}(\dim(\Delta)+1). 
\end{equation}
If $\Delta$ is a flat contained in a flat $\Lambda$, then  $\dim(\Delta) + 1 - \dim(\Lambda) \leq 1$, and if $\Delta$ is properly contained in $\Lambda$, then $\dim(\Delta) + 1 - \dim(\Lambda) \leq 0$.
If $\Delta \in \mathcal{G}_\Gamma$ and $\Delta \in \mathcal G$, then $\Delta \in \mathcal{A}$.
Hence,
\begin{equation}\label{eqn:subtractK}
\sum_{\Delta \in \mathcal{G}_\Gamma}(\dim(\Delta)+1) - \sum_{\Lambda \in \mathcal{G}} \dim(\Lambda) \leq |\mathcal{A}|.
\end{equation}
Since $\sum_{\Lambda \in \mathcal{G}} \dim(\Lambda) = K$ by definition, the conclusion of the lemma follows from inequalities (\ref{eqn:lemma5consequence}) and (\ref{eqn:subtractK}).
\end{proof}

\begin{lemma}\label{th:boundingByProjection}
Suppose $k \geq K$.
Let $\mathcal{A} \subseteq \mathcal{G}$ such that $|\mathcal{A}| = k+1-K$ and $f_{k - \dim \overline{\mathcal{A}} - 1}(\pi_{\overline{\mathcal{A}}})$ is maximized.
Let $k' = k - \dim \overline{\mathcal{A}} - 1$.
Then,
$$ f_k = \Theta(f_{k'}(\pi_{\overline{\mathcal A}})).$$
\end{lemma}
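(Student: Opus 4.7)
The plan is to prove matching $O$ and $\Omega$ bounds on $f_k$ separately, both resting on the same geometric correspondence. Given any $\mathcal{B} \subseteq \mathcal{G}$ with $|\mathcal{B}| = k+1-K$ and setting $a_\mathcal{B} := \dim\overline{\mathcal{B}}$, the $k$-flats in $\mathcal{F}_k$ that contain $\overline{\mathcal{B}}$ are in natural bijection with the $(k - a_\mathcal{B} - 1)$-flats in $\mathcal{F}_{k-a_\mathcal{B}-1}(\pi_{\overline{\mathcal{B}}})$. One direction sends a $k$-flat $\Gamma \supseteq \overline{\mathcal{B}}$ to $\pi_{\overline{\mathcal{B}}}(\Gamma)$; by equation (\ref{eqn:dimProj}) this is a flat of dimension $k - a_\mathcal{B} - 1$, and it is spanned by the projections of the $k - a_\mathcal{B}$ points of $P \cap \Gamma$ that together with $\overline{\mathcal{B}}$ span $\Gamma$. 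The inverse sends a spanned $(k-a_\mathcal{B}-1)$-flat $\Delta$ back to $\overline{\Delta, \overline{\mathcal{B}}}$.

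The lower bound is immediate from this correspondence applied to $\mathcal{B} = \mathcal{A}$: every $k'$-flat spanned by $\pi_{\overline{\mathcal{A}}}$ gives a distinct $k$-flat spanned by $P$ containing $\overline{\mathcal{A}}$, so $f_k \geq f_{k'}(\pi_{\overline{\mathcal{A}}})$.

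For the upper bound, Lemma \ref{th:containedFlats} guarantees that every $\Gamma \in \mathcal{F}_k$ contains at least one subset $\mathcal{B} \subseteq \mathcal{G}$ with $|\mathcal{B}| = k+1-K$. Double-counting pairs $(\Gamma,\mathcal{B})$ with $\overline{\mathcal{B}} \subseteq \Gamma$ therefore yields
$$f_k \;\leq\; \sum_{\substack{\mathcal{B} \subseteq \mathcal{G} \\ |\mathcal{B}| = k+1-K}} f_{k-a_\mathcal{B}-1}(\pi_{\overline{\mathcal{B}}}).$$
By the definition of $\mathcal{A}$, each summand is at most $f_{k'}(\pi_{\overline{\mathcal{A}}})$. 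The number of summands is bounded by $\binom{|\mathcal{G}|}{k+1-K} \leq \binom{K}{k+1-K}$, using the fact that $|\mathcal{G}| \leq K$ since every flat in $\mathcal{G}$ has dimension at least $1$ and their dimensions sum to $K$. This binomial coefficient depends only on $k$, giving $f_k = O(f_{k'}(\pi_{\overline{\mathcal{A}}}))$ and completing the proof.

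The only real subtlety is verifying the bijection cleanly, especially that distinct $k$-flats through $\overline{\mathcal{B}}$ project to distinct $k'$-flats (this follows because the preimage operation $\Delta \mapsto \overline{\Delta,\overline{\mathcal{B}}}$ inverts the projection on flats containing $\overline{\mathcal{B}}$). One should also dispose of the degenerate case $a_\mathcal{B} = k$, where $\overline{\mathcal{B}}$ is itself a $k$-flat: then $k' = -1$ and the corresponding summand contributes at most $1$, which is absorbed in the $O(1)$ factor.
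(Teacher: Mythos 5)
Your proposal is correct and follows essentially the same route as the paper: the same projection bijection between $k$-flats containing $\overline{\mathcal{B}}$ and spanned $k'$-flats of $\pi_{\overline{\mathcal{B}}}$ gives the lower bound via $\mathcal{A}$, and the upper bound comes from Lemma \ref{th:containedFlats} together with the bound of $\binom{K}{k+1-K} \leq 2^k$ on the number of choices of $\mathcal{B}$ (your double-counting is just the paper's injection into pairs $(\mathcal{B},\Lambda)$ phrased as a sum). No gaps beyond those the paper itself glosses over.
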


\begin{proof}
Note that there is a natural bijection between flats of $\mathcal{F}_{k'}(\pi_{\overline{\mathcal A}})$ and flats of $\mathcal{F}_k$ that contain $\overline{\mathcal A}$.
In particular, if $\Gamma \in \mathcal{F}_k$, then, by (\ref{eqn:dimProj}), we have
$$\dim(\pi_{\overline{\mathcal A}}(\Gamma)) = k - 1 - \dim(\Gamma \cap \overline{\mathcal A}) = k'.$$
In addition, $\overline{\pi_{\overline{\mathcal A}}(\Gamma), \mathcal{A}} = \Gamma$, so the map that sends each flat in $\mathcal{F}_k$ to its projection from $\mathcal{A}$ is invertible.
Since the $f_k$ is at least the number of flats in $\mathcal{F}_k$ that contain $\overline{\mathcal A}$, we have
$$f_k \geq f_{k'}(\pi_{\overline{\mathcal A}}).$$

On the other hand, by Lemma \ref{th:containedFlats}, for each $k$-flat $\Gamma \in \mathcal{F}_k$, there is at least one set $\mathcal{B} \subset \mathcal{G}$ with $|\mathcal{B}| = k+1-K$ such that $\Lambda \subset \Gamma$ for each $\Lambda \in \mathcal{B}$.
Hence, we can define an injective function that maps each $\Gamma \in \mathcal{F}_k$ to an arbitrary pair $(\mathcal{B}, \Lambda)$ where $\mathcal{B}$ is a set as guaranteed by Lemma \ref{th:containedFlats} and $\Lambda \in \mathcal{F}_{k - \dim{\overline{A}} - 1}(\pi_{\overline{\mathcal B}})$  so that $\Gamma = \overline{\Lambda, \mathcal {B}}$.
Since there are at most $\binom{K}{k+1-K} < 2^K \leq 2^k$ choices for $\mathcal{B}$, and $f_{k'}(\pi_{\overline {\mathcal A}}) \geq f_{k - \dim{\overline{B}} - 1}(\pi_{\overline{\mathcal B}})$ by assumption, this shows that
$$f_k \leq 2^k f_{k'}(\pi_{\overline{ \mathcal A}}),$$
which completes the proof of the lemma.
\end{proof}

Next is the the main result of the section.

\begin{theorem}\label{th:upperBound}
For $0 \leq k \leq K-1$,
\begin{equation}\label{eqn:lowDimUB}f_k = O \left( \prod_{i=0}^k (n-g_i)\right).\end{equation}
 For $k \geq K$,
\begin{equation}\label{eqn:highDimUB}f_k = O \left( \prod_{i=0}^{2(K-1)-k} (n-g_i) \right).\end{equation}
\end{theorem}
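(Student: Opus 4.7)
The plan is to establish the low-dimensional bound (\ref{eqn:lowDimUB}) by induction, and then derive the high-dimensional bound (\ref{eqn:highDimUB}) by reducing to (\ref{eqn:lowDimUB}) via Lemma \ref{th:boundingByProjection}. To avoid subtleties at boundary values, I would in fact formulate the whole theorem as a simultaneous induction on $K$, so that the inductive hypothesis can be applied freely to each projected point set (which has strictly smaller essential dimension).

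For (\ref{eqn:lowDimUB}), the base case $k = 0$ is immediate from $f_0 = n$. For the inductive step at $1 \leq k \leq K-1$, the starting point is the incidence identity
\begin{equation*}
(k+1)\, f_k \;\leq\; I(P, \mathcal{F}_k) \;=\; \sum_{p \in P} f_{k-1}(\pi_p),
\end{equation*}
which holds because each $k$-flat contains at least $k+1$ points and because the $k$-flats of $P$ through a fixed $p$ correspond bijectively to the $(k-1)$-flats spanned by $\pi_p$. Each projection satisfies $K(\pi_p) \leq K-1$, so the inductive hypothesis applies to $\pi_p$ and gives a bound on $f_{k-1}(\pi_p)$ in terms of factors $|\pi_p| - g_i(\pi_p)$. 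Summing over $p$ and folding this into the desired product $\prod_{i=0}^k (n - g_i)$ then requires an auxiliary projection lemma relating $g_i(\pi_p)$ to $g_{i+1}(P)$.

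For (\ref{eqn:highDimUB}), Lemma \ref{th:boundingByProjection} reduces $f_k$ to $\Theta(f_{k'}(\pi_{\overline{\mathcal{A}}}))$ for some $\mathcal{A} \subseteq \mathcal{G}$ with $|\mathcal{A}| = k + 1 - K$ and $k' = k - \dim \overline{\mathcal{A}} - 1$. A small but useful observation is that the flats of $\mathcal{G}$ must be pairwise disjoint — otherwise merging two intersecting flats would decrease the cardinality of $\mathcal{G}$ without increasing the dimension sum, contradicting the minimality condition 4 — so inequality (\ref{eqn:dimUpperBoundOnUnion}) is sharp on $\mathcal{A}$ and $\dim \overline{\mathcal{A}} = \sum_{\Lambda \in \mathcal{A}} \dim \Lambda + |\mathcal{A}| - 1$. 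Combined with $\sum_{\Lambda \in \mathcal{A}} \dim \Lambda \geq |\mathcal{A}| = k+1-K$, this gives $\dim \overline{\mathcal{A}} \geq 2(k - K) + 1$, and so $k' \leq 2(K-1) - k$, matching the exponent in (\ref{eqn:highDimUB}). The same calculation shows $K(\pi_{\overline{\mathcal{A}}}) < K$, so the inductive hypothesis (in whichever regime is appropriate for $\pi_{\overline{\mathcal{A}}}$) supplies the required bound on $f_{k'}(\pi_{\overline{\mathcal{A}}})$, which is translated back to $P$ by an analogue of the projection lemma above for projection from the higher-dimensional flat $\overline{\mathcal{A}}$.

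The main obstacle will be the projection lemma comparing $|\pi_\Lambda| - g_i(\pi_\Lambda)$ with $n - g_{i + \dim \Lambda + 1}(P)$. The natural pointwise statement $|\pi_p| - g_i(\pi_p) \leq n - g_{i+1}(P)$ can fail in corner cases — for instance, if $p$ is a stray point off a nearly-collinear configuration, $\pi_p$ collapses almost nothing and the left side is close to $n$ while the right side is close to $1$ — so the argument must pass to an averaged or summed form over $p \in P$, carefully tracking how the witnessing cover $\mathcal{G}$ interacts with lines through the projection center (the degenerate case being a line in the cover that contains $p$ and projects to a single point). Everything else in the proof is bookkeeping built on top of Lemmas \ref{th:containedFlats} and \ref{th:boundingByProjection}.
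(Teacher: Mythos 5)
Your proof of (\ref{eqn:lowDimUB}) is incomplete at exactly the point you flag yourself: the ``averaged projection lemma'' comparing $\sum_{p\in P}\prod_{i=0}^{k-1}\left(|\pi_p|-g_i(\pi_p)\right)$ with $\prod_{i=0}^{k}(n-g_i)$ is never formulated, let alone proved, and as your own near-collinear example shows there is no pointwise comparison of $g_i(\pi_p)$ with $g_{i+1}(P)$ to fall back on. Controlling $g_i(\pi_p)$ is precisely the hard content here; the paper only achieves such control (Lemma \ref{th:projectionOfA}) for points $p$ lying \emph{outside} the cover $\mathcal{G}_k$, and only in the lower-bound argument. The paper's proof of (\ref{eqn:lowDimUB}) avoids projecting from points of $P$ altogether: set $P_1=\bigcup_{\Gamma\in\mathcal{G}_k}(P\cap\Gamma)$ and $P_2=P\setminus P_1$, so that $|P_2|=n-g_k$ and $K(P_1)\le k$; prove the splitting inequality $f_k\le\sum_{i=-1}^{k}f_i(P_1)f_{k-i-1}(P_2)$ by mapping each $\Gamma\in\mathcal{F}_k$ injectively to the pair $(\overline{P_1\cap\Gamma},\Gamma_2')$ with $\Gamma_2'$ a complementary flat inside $\overline{P_2\cap\Gamma}$; bound $f_{k-i-1}(P_2)\le(n-g_k)^{k-i}\le\prod_{j=0}^{k-i-1}(n-g_{k-j})$ trivially; and bound $f_i(P_1)$ for $i<k$ by an inner induction on $k$ and for $i=k$ by (\ref{eqn:highDimUB}) applied to $P_1$ (which is why (\ref{eqn:highDimUB}) is proved first inside the outer induction on $K$). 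Without either this decomposition or an actual proof of your averaged lemma, the inductive step for (\ref{eqn:lowDimUB}) does not go through.

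There is also a concrete error in your treatment of (\ref{eqn:highDimUB}). Minimality of $\mathcal{G}$ does give pairwise disjointness of its flats (equivalently, equality in (\ref{eqn:dimSpan}) for pairs), but it does \emph{not} make (\ref{eqn:dimUpperBoundOnUnion}) an equality for $|\mathcal{A}|\ge 3$: replacing three or more flats by their span may exceed the dimension budget $k$, so condition 4 cannot be invoked; for instance, three pairwise skew lines whose span is only a $4$-flat can form a legitimate minimal $\mathcal{G}_3$. Hence your claim $\dim\overline{\mathcal{A}}=\sum_{\Lambda\in\mathcal{A}}\dim(\Lambda)+|\mathcal{A}|-1$, and with it $\dim\overline{\mathcal{A}}\ge 2(k-K)+1$ and $k'\le 2(K-1)-k$, is unjustified. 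The paper never lower-bounds $\dim\overline{\mathcal{A}}$; it gets the exponent by upper-bounding the essential dimension of the projection, $K(\pi_{\overline{\mathcal{A}}})\le K-\sum_{\Lambda\in\mathcal{A}}\dim(\Lambda)\le 2K-1-k$ (the preimage of $\pi_{\overline{\mathcal{A}}}$ lies on the flats of $\mathcal{G}\setminus\mathcal{A}$), observing that both regimes of the inductive bound for $\pi_{\overline{\mathcal{A}}}$ are dominated by $O\bigl(\prod_{i=0}^{K(\pi_{\overline{\mathcal{A}}})-1}(|\pi_{\overline{\mathcal{A}}}|-g_i(\pi_{\overline{\mathcal{A}}}))\bigr)$, and then using $|\pi_{\overline{\mathcal{A}}}|-g_i(\pi_{\overline{\mathcal{A}}})\le n-g_i$, which is the easy (and correct) part of your ``translation back''. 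Replacing your sharpness claim by this $K(\pi_{\overline{\mathcal{A}}})$-based argument would repair the high-dimensional half; the low-dimensional half needs the partition argument or a genuinely new lemma.
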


\begin{proof}
The proof is structured as follows.
There is an outer induction on $K$.
For a fixed $K$, we first prove inequality (\ref{eqn:highDimUB}), and then use an induction on $k$ to prove inequality (\ref{eqn:lowDimUB}).

The base case $k=0$ and $K \geq 1$ is immediate, since $f_0 = n = n - g_0$ by definition.

Assume that inequalities (\ref{eqn:lowDimUB}) and (\ref{eqn:highDimUB}) hold for all $k$ when $K' <K$.

Suppose that $k \geq K$.
By Lemma \ref{th:containedFlats}, either $|\mathcal{G}| \geq k+1-K$, or $f_k = 0$.
If $f_k=0$, then we're done, so suppose that $|\mathcal{G}| \geq k + 1 - K$.

By Lemma \ref{th:boundingByProjection}, there is a set $\mathcal A \subseteq \mathcal G$ with $|\mathcal{A}| = k+1-K$ such that $f_k =  \Theta(f_{k'}(\pi_{\overline{\mathcal A}}))$, for $k'=k-\dim \overline { \mathcal A}-1$.

Before bounding $f_{k'}(\pi_{\overline{\mathcal A}})$, we first make some simple observations about $\pi_{\overline{\mathcal A}}$.
By definition, each point of $\pi_{\overline{\mathcal A}}$ is the image of one or more points that lie on flats of $\mathcal{G} \setminus \mathcal{A}$.
Since $\dim(\Lambda) \geq \dim(\pi_{\overline {\mathcal A}}(\Lambda))$ for any flat $\Lambda$, the fact that the preimage of $\pi_{\overline{\mathcal A}}$ is contained the flats of $\mathcal{G} \setminus \mathcal{A}$ implies that $$K(\pi_{\overline{\mathcal A}}) \leq \sum_{\Lambda \in \mathcal{G} \setminus \mathcal{A}} \dim(\Lambda) = K - \sum_{\Lambda \in \mathcal{A}} \dim(\Lambda).$$
Since $\sum_{\Lambda \in \mathcal{A}} \dim(\Lambda) \geq |\mathcal{A}| = k+1-K$, we have
$$K(\pi_{\overline{\mathcal A}}) \leq 2K - 1 - k.$$
In particular, $K(\pi_{\overline{\mathcal A}}) < K$, so we will be able to use the inductive hypothesis to bound $f_{k'}$.

Observe that the right sides of (\ref{eqn:lowDimUB}) and (\ref{eqn:highDimUB}) are both bounded above by
$O(\Pi_{i=0}^{K-1} (n-g_i))$.
Hence, by the inductive hypothesis, we have that
\begin{align}
\nonumber f_{k'}(\pi_{\overline{\mathcal A}}) &= O \left (\Pi_{i=0}^{K(\pi_{\overline{\mathcal A}})-1} (|\pi_{\overline{\mathcal A}}| - g_i(\pi_{\overline{\mathcal A}})) \right), \\
\label{eqn:upperBoundOnk'}&= O \left (\Pi_{i=0}^{2K-2-k} (|\pi_{\overline{\mathcal A}}| - g_i(\pi_{\overline{\mathcal A}})) \right).
\end{align}

Note that $|\pi_{\overline{\mathcal A}}|- g_i(\pi_{\overline {\mathcal A}}) \leq n - g_i$ for each $i$.
Indeed, the preimage of $\pi_{\overline{\mathcal A}} \cap \mathcal{G}_i(\pi_{\overline{\mathcal A}})$ has essential dimension at least $i$, so the preimage of $\pi_{\overline{\mathcal A}} \setminus (\pi_{\overline{\mathcal A}} \cap \mathcal{G}_i(\pi_{\overline{\mathcal A}}))$ provides a witness that $n-g_i \geq |\pi_{\overline{\mathcal A}}|- g_i(\pi_{\overline {\mathcal A}})$.

Together with (\ref{eqn:upperBoundOnk'}), this completes the proof of (\ref{eqn:highDimUB}).

Suppose now that $k \leq K-1$, and assume that inequality (\ref{eqn:lowDimUB}) holds for $K$ and $k' < k$.

 We claim that if $P_1,P_2$ is a partition of $P$, then
\begin{equation}\label{eqn:splitUpperBound}f_k \leq \sum_{i=-1}^k f_i(P_1) f_{k-i-1}(P_2).\end{equation}
To show this, we map $\mathcal{F}_k$ into $\bigcup_i (\mathcal{F}_i(P_1) \times \mathcal{F}_{k-i-1}(P_2))$.
Let $\Gamma \in \mathcal{F}_k$, let $\Gamma_1 = \overline{P_1 \cap \Gamma}$, and let $\Gamma_2 = \overline{P_2 \cap \Gamma}$.
Using equation (\ref{eqn:dimSpan}) and the fact that $\dim(\Gamma_1 \cap \Gamma_2) \geq -1$, we have
$$\dim(\Gamma_2) \geq k - \dim(\Gamma_1) - 1.$$
Let $\Gamma_2' \subseteq \Gamma_2$ be a  $(k - \dim(\Gamma_1) - 1)$-flat disjoint from $\Gamma_1$.
Note that $\overline{\Gamma_1, \Gamma_2'} = \Gamma$.
Also note that, if $\Gamma_1 = \Gamma$, then $\Gamma_2' = \emptyset$.
Map $\Gamma$ to the pair $(\Gamma_1, \Gamma_2')$.
Since $\Gamma$ is the unique $k$-flat spanned by $\Gamma_1$ and $\Gamma_2'$, the map is injective, and so inequality (\ref{eqn:splitUpperBound}) is established.

Let $P_1 = \cup_{\Gamma \in \mathcal{G}_k} (P \cap \Gamma)$, and let $P_2 = P \setminus P_1$.
By inequality (\ref{eqn:splitUpperBound}),
\begin{align} \nonumber f_k & \leq \sum_{i=-1}^k f_i(P_1) f_{k-i-1}(P_2), \\
& \leq (k+2) \max_{-1 \leq i \leq k} f_i(P_1) f_{k-i-1}(P_2). \label{eqn:fkUB} \end{align}

Since $|P_2| = n-g_k$, we have
\begin{equation}\label{eqn:uncontrolledPart}
f_{k-i-1}(P_2) \leq (n-g_k)^{k-i} \leq \prod_{j=0}^{k-i-1} (n - g_{k-j}).\end{equation}

For $i<k$, the inductive hypothesis implies
\begin{equation}\label{eqn:lowControlledPart} f_i(P_1) = O\left( \prod_{j=0}^i \left (|P_1| - g_j(P_1) \right)\right) = O\left( \prod_{j=0}^i (n-g_j)\right).\end{equation}

For $i=k$, inequality (\ref{eqn:highDimUB}) implies
\begin{equation}\label{eqn:highControlledPart} f_k(P_1) = O \left( \prod_{j=0}^{k-2} (|P_1| - g_j(P_1)) \right) = O\left( \prod_{j=0}^{k} (n - g_j)\right).\end{equation}

With an appropriate choice of the constants hidden in the asymptotic notation, this completes the proof of inequality (\ref{eqn:lowDimUB}).
\end{proof}

\section{Known results in the plane}\label{sec:knownResults}

In order to prove the lower bounds of Theorem \ref{th:numberOfFlats}, we will use two known consequences of the Szemer\'edi-Trotter theorem.

The Szemer\'edi-Trotter theorem was proved for real geometry by Szemer\'edi and Trotter \cite{szemeredi1983extremal}, and proved for complex geometry by T\'oth \cite{toth2015szemeredi}, and, using a different method, by Zahl \cite{zahl2012szemeredi}.

\begin{theorem}\label{th:ST}[Szemer\'edi-Trotter]
For any $t$,
$$f_1^{\geq t} = O(n^2/t^3 + n/t).$$
\end{theorem}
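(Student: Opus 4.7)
The plan is to deduce the stated bound from the classical incidence version of the Szemer\'edi-Trotter theorem, namely that for any $n$ points and $m$ lines in $\mathbb{R}^2$ (or $\mathbb{C}^2$), the number of incidences satisfies $I = O(n^{2/3} m^{2/3} + n + m)$. This is the form proved by Szemer\'edi and Trotter \cite{szemeredi1983extremal} in the real case, and by T\'oth \cite{toth2015szemeredi} and Zahl \cite{zahl2012szemeredi} in the complex case.

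Given the incidence bound, the stated estimate follows by a short double counting. Setting $m = f_1^{\geq t}$ and letting $L$ be the corresponding set of rich lines, the assumption gives $I(P,L) \geq t m$, so
$$t m = O(n^{2/3} m^{2/3} + n + m).$$
A trichotomy on which term on the right dominates yields either $m = O(n^2/t^3)$, or $m = O(n/t)$, or $t = O(1)$. In the last case the trivial estimate $m \leq \binom{n}{2}$ is absorbed into $O(n^2/t^3)$, so in all cases $f_1^{\geq t} = O(n^2/t^3 + n/t)$.

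The real work is therefore in the underlying incidence bound. Over $\mathbb{R}$ I would use Sz\'ekely's argument: form a multigraph $G$ whose vertex set is $P$ by joining consecutive pairs of points along each line of $L$; then $|E(G)| = I(P,L) - m$, and $G$ is drawn in the plane so that any two edges cross only at a line-line intersection, giving at most $\binom{m}{2}$ crossings. In the regime $|E(G)| \geq 4n$, the crossing-number inequality $\mathrm{cr}(G) \geq c |E(G)|^3 / n^2$ combined with the trivial upper bound $\binom{m}{2}$ on crossings yields the $n^{2/3} m^{2/3}$ term after rearrangement, and the low-edge regime is where the linear terms come from.

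The main obstacle in extending the proof to $\mathbb{C}$ is that complex lines are real $2$-planes in $\mathbb{R}^4$, so one loses the planar topology underlying Sz\'ekely's argument. One remedy is a cell decomposition tailored to complex lines (T\'oth), and another is polynomial partitioning applied to the real algebraic surfaces obtained by regarding complex lines as real $2$-flats (Zahl); either approach replaces the planar crossing inequality by a more delicate geometric partition, and this is where essentially all of the technical difficulty lies.
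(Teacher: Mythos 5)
Your reduction is correct: the paper itself gives no proof of this statement, treating it as a quoted black box with citations to Szemer\'edi--Trotter, T\'oth, and Zahl, so there is nothing internal to compare against. Your derivation of the rich-lines form from the incidence bound $I = O(n^{2/3}m^{2/3} + n + m)$ via the trichotomy is the standard and correct argument (including absorbing the $t = O(1)$ case into the $n^2/t^3$ term), and your sketches of Sz\'ekely's crossing-number proof over $\mathbb{R}$ and of the cell-decomposition/polynomial-partitioning difficulties over $\mathbb{C}$ accurately reflect what the cited works do.
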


Theorem \ref{th:BeckErdos} was proved by Beck \cite{beck1983lattice} when the underlying field is the real numbers, and the idea of Beck's proof is easily adapted to use Theorem \ref{th:ST}.

\begin{theorem}[Beck]\label{th:BeckErdos}
There is a constant $c_b$ such that
\[f_1^{\leq c_b} = \Omega( n (n-g_1)).\]
\end{theorem}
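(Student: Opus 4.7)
The plan is to follow Beck's original proof, adapted to use the complex Szemerédi--Trotter bound of Theorem~\ref{th:ST}. The argument naturally splits on whether $g_1$ is small or large relative to $n$: if $g_1 = o(n)$, the ``counting pairs'' argument suffices; if $g_1 = \Theta(n)$ there is one very rich line $L^*$, and I will instead produce poor lines through the points outside $L^*$.

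Fix a large constant $C$ and pick $c_b$ afterwards in terms of $C$. In Case~1, when $g_1 \le n/C$, I would count the pairs of points of $P$. There are $\binom{n}{2}$ such pairs, and each lies on a unique line. By an Abel summation using Theorem~\ref{th:ST}, the number of pairs on lines with at least $t$ points is at most
\[
\binom{t}{2} f_1^{\ge t} + \sum_{k=t+1}^{g_1} (k-1)\,f_1^{\ge k} \;=\; O\!\left(\frac{n^2}{t} + n g_1\right).
\]
Since $g_1 \le n/C$, both terms are a small fraction of $n^2$ once $t$ and $C$ are chosen as large enough constants. Hence at least a constant fraction of all pairs lie on lines with fewer than $t$ points, and since each such line accounts for at most $\binom{t}{2}$ pairs, there are $\Omega(n^2)$ such lines. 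Taking $c_b \ge t$ finishes Case~1, since $n - g_1 = \Theta(n)$.

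In Case~2, when $g_1 > n/C$, let $L^*$ be a line containing $g_1$ points of $P$ (if $g_1 = n$ the bound is vacuous). For each $p \in P \setminus L^*$, the $g_1$ lines joining $p$ to the points of $L^* \cap P$ are all distinct and each meets $L^*$ in exactly one point. A line through $p$ with more than $c_b$ points of $P$ must contain $p$, one point of $L^*$, and at least $c_b - 1$ further points of $P \setminus L^*$. Since $|P \setminus L^*| - 1 = n - g_1 - 1$, at most $(n-g_1)/(c_b - 1)$ of the lines through $p$ to $L^*$ can be rich. Choosing $c_b$ large enough in terms of $C$ makes this at most $g_1/2$, so at least $g_1/2$ of the lines through $p$ meeting $L^*$ carry at most $c_b$ points of $P$. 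Summing over the $n - g_1$ points $p \in P \setminus L^*$ yields at least $(n - g_1)g_1/2$ incidences between poor lines and $P \setminus L^*$. Each poor line contains at most $c_b$ points in total, hence at most $c_b$ such incidences, so the number of poor lines is $\Omega((n - g_1)g_1) = \Omega(n(n-g_1))$ using $g_1 > n/C$.

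The main obstacle is selecting $C$ and $c_b$ consistently so that the same constant $c_b$ works in both cases. The technical reason the two-case split is forced on us is that the $O(n/t)$ term in Theorem~\ref{th:ST}, when summed up to $k = g_1$, contributes $\Theta(n g_1)$, which swamps the $\Omega(n^2)$ pair count as soon as $g_1$ is comparable to $n$; Case~2 sidesteps this by using the geometric structure around $L^*$ rather than the generic incidence bound.
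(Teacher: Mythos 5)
Your proposal is correct. It shares the paper's overall skeleton -- a pair-counting argument via Theorem~\ref{th:ST} when no line is very rich, and explicit connecting lines to a dominant line $L^*$ otherwise -- but the rich-line case is handled by a genuinely different (and arguably more elementary) argument. The paper lower-bounds the number of lines joining $\ell$ to the off-line points by Cauchy--Schwarz and then shows a constant fraction of them are poor by an averaging/Markov step over the at most $\binom{n-g_1}{2}$ pairs off $\ell$; you instead argue pointwise: through a fixed $p \notin L^*$ the connecting lines are pairwise disjoint away from $p$, so each rich one consumes at least $c_b-1$ points of $P \setminus L^*$, forcing at most $(n-g_1-1)/(c_b-1) \le g_1/2$ rich ones once $c_b \gg C$, and then a single incidence double count (dividing by $c_b$) converts the $(n-g_1)g_1/2$ point--poor-line incidences into $\Omega(n(n-g_1))$ poor lines. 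Also, where the paper deduces the existence of a rich line from the pair count itself (many pairs on lines with $\ge c_1 n$ points), you split a priori on whether $g_1 \le n/C$; the two dichotomies are equivalent here. The ``obstacle'' you flag about choosing constants consistently is not a real one: fix $C$ first (say to beat the $O(ng_1) \le O(n^2/C)$ term), then $t$ to beat the $O(n^2/t)$ term, and finally $c_b = \max(t, 2C+1)$, which serves both cases simultaneously.
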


\begin{proof}
Let $0 < c_1 < 1$ be a constant to fix later.
Counting pairs of points of $P$ that are on lines that contain between $c_b$ and $c_1 n$ points of $P$, we have
\begin{align*}
\sum_{t = c_b}^{c_1n} f_1^{=t} t^2 &= \sum_{t=c_b}^{c_1n} t^2 (f_1^{\geq t} - f_1^{\geq t+1}), \\
&= \sum_{t=c_b}^{c_1n} t^2 f_1^{\geq t} - \sum_{t=c_b + 1}^{c_1n + 1} (t-1)^2 f_1^{\geq t}, \\
&= O\left( \sum_{t = c_b}^{c_1n} t f_1^{\geq t} \right).
\end{align*}

Applying Theorem \ref{th:ST}, for appropriate choices of $c_b$ and $c_1$ we have
$$
O\left( \sum_{t = c_b}^{\sqrt{n}} tf_1^{\geq t} \right) = O\left(\sum_{t = c_b}^{\sqrt{n}} n^2/t^2 \right) \leq n^2/10,$$
and
$$
O\left(\sum_{t=\sqrt{n}}^{c_1 n} tf_1^{\geq t} \right) = O\left(\sum_{t=\sqrt{n}}^{c_1 n} n \right) \leq n^2/10.$$

Hence, either at least $n^2/4$ pairs of points are on lines that each contain at most $c_b$ points, or at least $n^2/4$ pairs of points are on lines that contain at least $c_1 n$ points.
In the first case, $f_1^{\leq c_b} \geq n^2/(4 c_b^2)$, and the theorem is proved.
Hence, we suppose that $g_1 > c_1 n$.

Let $\ell$ be a line incident to $g_1$ points of $P$, and let $P'$ be a set of $\min(g_1, n-g_1)$ points that are not incident to $\ell$.
Let $L$ be the set of lines that contain one point of $P \cap \ell$ and at least one point of $P'$.
Since each point of $P'$ is incident to $g_1$ lines of $L$, we have
$$\sum_{l \in L} |P' \cap l| = |P'|g_1.$$
Since each ordered pair of distinct points in $P'$ is incident to at most one line of $L$, we have
$$\sum_{l \in L} (|P' \cap l|^2 - |P' \cap l|) \leq |P'|^2 - |P'|.$$
By Cauchy-Schwarz,
$$\sum_{l \in L} |P' \cap l|^2 \geq \frac{\left(\sum_{\ell \in L} |P' \cap \ell| \right)^2}{|L|} = \frac{|P'|^2g_1^2}{|L|}.$$
Combining these and rearranging, we have
$$|L| \geq \min(|P'|g_1, g_1^2) = \Omega(n(n-g_1)).$$

It remains to show that a constant portion of the lines of $L$ each contain at most $c_b$ points of $P$.
Let $P''$ be the set of $n-g_1$ points of $P$ that are not incident to $\ell$.
Each pair of points of $P''$ is incident to at most $1$ line of $L$, hence the expected number of pairs of points of $P''$ on a randomly chosen line of $L$ is at most $\binom{(n-g_1)}{2}|L|^{-1} = O(1)$.
Markov's inequality implies that at least half of the lines of $L$ are each incident to at most twice the expected number of points of $P''$, and the conclusion of the theorem follows.
\end{proof}

Theorem \ref{th:aveDirac} is a variant of the ``weak Dirac" theorem, proved independently by Beck \cite{beck1983lattice}, and by Szemer\'edi and Trotter \cite{szemeredi1983extremal}.

\begin{theorem}[Weak Dirac] \label{th:aveDirac}
There is a constant $c_d$ such that, if $P$ does not include $c_d n$ collinear points, then there is a subset $B \subseteq P$ with $|B| = \Omega(|P|)$ such that each point in $B$ is incident to at least $\Omega(n)$ lines spanned by $P$.
\end{theorem}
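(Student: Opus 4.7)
The plan is to derive the weak Dirac theorem as a short consequence of Theorem \ref{th:BeckErdos} together with a first-moment (Markov-style) argument. Choose $c_d$ to be any positive constant less than $1$ (say $c_d = 1/2$). The hypothesis that $P$ contains no $c_d n$ collinear points means $g_1 < c_d n$, so $n - g_1 = \Omega(n)$. Applying Theorem \ref{th:BeckErdos} then yields
\[ f_1^{\leq c_b} = \Omega(n(n-g_1)) = \Omega(n^2), \]
i.e.\ there are $\Omega(n^2)$ lines, each spanned by $P$ and each incident to at most $c_b$ points of $P$.

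Next I would count incidences between $P$ and $\mathcal{F}_1^{\leq c_b}$. Every line that is spanned carries at least two points of $P$, so the total incidence count satisfies
\[ I(P, \mathcal{F}_1^{\leq c_b}) \;\geq\; 2 f_1^{\leq c_b} \;=\; \Omega(n^2). \]
For each $p \in P$ let $d_p$ be the number of lines in $\mathcal{F}_1^{\leq c_b}$ through $p$. Then $\sum_p d_p = I(P, \mathcal{F}_1^{\leq c_b}) \geq \alpha n^2$ for some absolute constant $\alpha > 0$, and trivially $d_p \leq n-1$ for every $p$, since any line through $p$ contains at least one other point of $P$.

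Finally, a Markov argument isolates the desired set $B$. Set $B = \{p \in P : d_p \geq (\alpha/2)\, n\}$ and split the incidence sum:
\[ \alpha n^2 \;\leq\; \sum_{p \in B} d_p + \sum_{p \notin B} d_p \;\leq\; |B|\,n + (n-|B|)\cdot (\alpha/2)\, n, \]
which rearranges to $|B| \geq \tfrac{\alpha/2}{1-\alpha/2}\, n = \Omega(n)$. Every point of $B$ lies on at least $(\alpha/2)\, n$ lines of $\mathcal{F}_1^{\leq c_b}$, hence on $\Omega(n)$ lines spanned by $P$, which is exactly what the theorem requires.

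There is no substantive obstacle in this argument; the only care needed is in tracking constants, and specifically in noting that Theorem \ref{th:BeckErdos} as stated has no hypothesis on $g_1$, so we only need the single input $n-g_1 = \Omega(n)$ to start the chain. The choice of $c_d$ in the hypothesis is essentially free in $(0,1)$ and is independent of the constants $c_b, c_1$ that appear inside the proof of Beck's theorem.
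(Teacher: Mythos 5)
Your argument is correct and follows essentially the same route as the paper: apply Theorem \ref{th:BeckErdos} under the hypothesis $g_1 < c_d n$ to get $\Omega(n^2)$ spanned lines, then use the trivial bound of at most $n$ lines through any point together with an averaging (Markov) step to extract $\Omega(n)$ points each on $\Omega(n)$ lines. The paper merely states this double-counting step more tersely; your explicit incidence bookkeeping and the observation that $c_d$ may be any constant in $(0,1)$ are consistent with it.
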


\begin{proof}
By Theorem \ref{th:BeckErdos}, if no line contains $c_d n$ points of $P$, then $P$ spans $\Omega(n^2)$ lines.
Since no point is incident to more than $n$ such lines, there must be $\Omega(n)$ points each incident to $\Omega(n)$ of these lines.
\end{proof}

\section{Lower bound of Theorem \ref{th:numberOfFlats}}\label{sec:lowerBound}

In this section, we prove Theorem \ref{th:lowerBound}, which gives the lower bound of Theorem \ref{th:numberOfFlats}.

We will need the following consequence of the minimality of $\mathcal{G}_k$.

\begin{lemma}\label{th:minimalityOfG}
For arbitrary $k$, let $\mathcal{A} \subseteq \mathcal{G}_k$, with $|\mathcal{A}| \geq 2$, and let $\Lambda$ be an arbitrary flat.
Then
$$\sum_{\Gamma \in \mathcal{A}} \dim(\Gamma \cap \Lambda) < \dim(\Lambda).$$
\end{lemma}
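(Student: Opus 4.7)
The proof should go by contradiction against the minimum-cardinality clause (condition 4) in the definition of $\mathcal{G}_k$. Assume that $\sum_{\Gamma \in \mathcal{A}} \dim(\Gamma \cap \Lambda) \geq \dim(\Lambda)$ and consider the replacement $\mathcal{G}_k' := (\mathcal{G}_k \setminus \mathcal{A}) \cup \{\overline{\mathcal{A}}\}$. Since $|\mathcal{A}| \geq 2$, we have $|\mathcal{G}_k'| < |\mathcal{G}_k|$. Conditions 1 and 3 are immediate for $\mathcal{G}_k'$: the new flat $\overline{\mathcal{A}}$ has dimension at least $1$ because it contains each $\Gamma \in \mathcal{A}$, and $\bigcup \mathcal{G}_k' \supseteq \bigcup \mathcal{G}_k$, so $\mathcal{G}_k'$ still covers at least $g_k$ points of $P$ (and at most $g_k$ by maximality of $g_k$). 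What remains is to verify condition 2, i.e., that the dimension sum does not increase. This reduces the lemma to the single inequality
\[\dim(\overline{\mathcal{A}}) \leq \sum_{\Gamma \in \mathcal{A}} \dim(\Gamma). \qquad (\star)\]

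To establish $(\star)$, I would iterate the modular identity (\ref{eqn:dimSpan}). Enumerate $\mathcal{A} = \{\Gamma_1, \ldots, \Gamma_m\}$ and write $\mathcal{A}_j = \{\Gamma_1, \ldots, \Gamma_j\}$. A standard telescoping gives the identity
\[\sum_{i=1}^m \dim(\Gamma_i) - \dim(\overline{\mathcal{A}}) = \sum_{i=2}^m \dim\bigl(\overline{\mathcal{A}_{i-1}} \cap \Gamma_i\bigr),\]
so it suffices to show that the right-hand side is nonnegative. Now let $\Gamma_i' := \Gamma_i \cap \Lambda$. Because $\Gamma_i' \subseteq \Gamma_i$ for each $i$, we have the containment $\overline{\Gamma_1', \ldots, \Gamma_{i-1}'} \cap \Gamma_i' \subseteq \overline{\mathcal{A}_{i-1}} \cap \Gamma_i$, and hence it is enough to bound below the analogous sum with each $\Gamma_i$ replaced by $\Gamma_i'$. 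But the same telescoping, this time carried out inside the flat $\Lambda$ on the flats $\Gamma_i' \subseteq \Lambda$, rewrites that sum as $\sum_i \dim(\Gamma_i') - \dim(\overline{\Gamma_1', \ldots, \Gamma_m'})$. Since $\overline{\Gamma_1', \ldots, \Gamma_m'} \subseteq \Lambda$, this is at least $\sum_i \dim(\Gamma_i') - \dim(\Lambda)$, which is nonnegative by the assumed inequality. This yields $(\star)$ and completes the contradiction.

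The main obstacle is conceptual rather than computational: one has to recognize that the content of the lemma is not a direct geometric estimate, but rather a consequence of the minimality clause in the definition of $\mathcal{G}_k$, and that the right replacement is to collapse $\mathcal{A}$ to the single flat $\overline{\mathcal{A}}$ while certifying that this does not raise the dimension sum. Once the replacement is chosen, the proof of $(\star)$ is a fairly mechanical double application of (\ref{eqn:dimSpan})---once in the ambient space and once inside $\Lambda$---with the hypothesis entering only through the single comparison $\dim(\overline{\Gamma_1', \ldots, \Gamma_m'}) \leq \dim(\Lambda)$.
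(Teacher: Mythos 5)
Your proposal is correct and matches the paper's argument: both assume the inequality fails, use the negated hypothesis and iterated applications of (\ref{eqn:dimSpan}) to conclude $\dim(\overline{\mathcal{A}}) \leq \sum_{\Gamma \in \mathcal{A}} \dim(\Gamma)$, and then contradict the minimum-cardinality clause by replacing $\mathcal{A}$ with $\overline{\mathcal{A}}$ in $\mathcal{G}_k$. The only difference is bookkeeping: the paper adjoins $\Lambda$ to the chain of spans ($\Lambda_i = \overline{\Gamma_1,\ldots,\Gamma_i,\Lambda}$) and bounds $\dim(\Gamma_i \cap \Lambda_{i-1}) \geq \dim(\Gamma_i \cap \Lambda)$ at each step, whereas you run two parallel telescoping sums (one ambient, one inside $\Lambda$) and compare them termwise, which yields the same inequality (\ref{eqn:overlineALambda}).
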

\begin{proof}
Label the flats in $\mathcal{A}$ as $\Gamma_1, \ldots, \Gamma_{|\mathcal{A}|}$.
Let $\Lambda_i = \overline{\Gamma_1,\ldots,\Gamma_i,\Lambda}$, with $\Lambda_0 = \Lambda$.

We claim that
\begin{equation}\label{eqn:Lambdai}\dim(\Lambda_i) \leq \dim(\Lambda) - \sum_{j=1}^i \dim(\Gamma_j \cap \Lambda) + \sum_{j=1}^i \dim(\Gamma_j).\end{equation}
The proof of (\ref{eqn:Lambdai}) is by induction on $i$.
In the base case, $i=0$ and the claim is trivial.

Suppose (\ref{eqn:Lambdai}) holds for $i' < i$.
Then, applying equation (\ref{eqn:dimSpan}),
\begin{align*}
\dim(\overline{\Gamma_i, \Lambda_{i-1}}) + \dim(\Gamma_i \cap \Lambda_{i-1}) &= \dim(\Gamma_i) + \dim(\Lambda_{i-1}), \text{ so} \\
\dim(\Lambda_i) + \dim(\Gamma_i \cap \Lambda) &\leq \dim(\Gamma_i) + \dim(\Lambda_{i-1}).
\end{align*}
Inequality (\ref{eqn:Lambdai}) follows by the inductive hypothesis.

Hence,
\begin{equation}\label{eqn:overlineALambda}\dim(\overline{\mathcal{A}}) \leq \dim(\Lambda_{|\mathcal{A}|}) \leq \dim(\Lambda) + \sum_{\Gamma \in \mathcal{A}} \dim(\Gamma) - \sum_{\Gamma \in \mathcal{A}}\dim(\Gamma \cap \Lambda).\end{equation}

If we suppose that $\dim(\Lambda) \leq \sum_{\Gamma \in \mathcal{A}} \dim(\Gamma \cap \Lambda)$, then (\ref{eqn:overlineALambda}) implies that $\dim(\overline{\mathcal A}) \leq \sum_{\Gamma \in \mathcal{A}} \dim(\Gamma)$.
Hence, we can reduce the size of $\mathcal{G}_k$ by replacing $\mathcal{A}$ by $\overline{\mathcal{A}}$, which contradicts the minimality of $\mathcal{G}_k$.
\end{proof}

We use Lemma \ref{th:minimalityOfG} to control the projection of the points contained in flats of $\mathcal{G}_k$ from a point in $P$ that is not contained in a flat of $\mathcal{G}_k$.

\begin{lemma}\label{th:projectionOfA}
Let $k < K$, let $A = \cup_{\Gamma \in \mathcal{G}_k} \Gamma \cap P$, and let $p \in P \setminus A$.
Then, for $0 \leq i \leq k-1$,
\begin{align}
\label{eqn:projectA}g_i(\pi_p(A)) &\leq g_i(A) + k^2, \\
\label{eqn:Apreserved}|\pi_p(A)| &\geq |A| - k^2.
\end{align}
\end{lemma}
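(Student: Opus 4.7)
The plan is to analyze how $\pi_p$ interacts with the flats in $\mathcal{G}_k$, exploiting that $p$ lies outside every $\Gamma\in\mathcal{G}_k$.

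First, since $p\in P\setminus A$, we have $p\notin\Gamma$ for every $\Gamma\in\mathcal{G}_k$. Applying Lemma~\ref{th:minimalityOfG} with $\mathcal{A}=\{\Gamma,\Gamma'\}\subseteq\mathcal{G}_k$ and $\Lambda=\Gamma\cap\Gamma'$ forces $\dim(\Gamma\cap\Gamma')<\dim(\Gamma\cap\Gamma')$, so the flats in $\mathcal{G}_k$ are pairwise disjoint. Moreover, $\pi_p$ is injective on each individual $\Gamma\in\mathcal{G}_k$: two distinct points of $\Gamma$ with the same image would be collinear with $p$, forcing the entire line (and thus $p$) into $\Gamma$.

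For (\ref{eqn:Apreserved}), the loss $|A|-|\pi_p(A)|$ equals $\sum_\ell(|\ell\cap A|-1)$ summed over lines $\ell$ through $p$ containing at least two points of $A$. By injectivity on each $\Gamma_j$, such a line meets two distinct $\Gamma_j,\Gamma_{j'}$ in exactly one point each. A direct computation using (\ref{eqn:dimSpan}) shows that $\dim(\pi_p(\Gamma_j)\cap\pi_p(\Gamma_{j'}))\le 0$, so each unordered pair $\{\Gamma_j,\Gamma_{j'}\}$ is met by at most one such line. Bounding $|\ell\cap A|-1\le\binom{|\ell\cap A|}{2}$ and summing over lines yields $|A|-|\pi_p(A)|\le\binom{|\mathcal{G}_k|}{2}\le\binom{k}{2}\le k^2$.

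For (\ref{eqn:projectA}), fix $Q\subseteq\pi_p(A)$ realizing $g_i(\pi_p(A))$ and a covering $Q\subseteq\bigcup_m\Delta_m$ with $\sum_m\dim\Delta_m\le i$ and each $\dim\Delta_m\ge 1$. I plan to construct a subset $\tilde Q^*\subseteq A$ with $|\tilde Q^*|\ge|Q|-k^2$ and $K(\tilde Q^*)\le i$. Applying Lemma~\ref{th:minimalityOfG} with $\Lambda=\overline{p,\Delta_m}$ gives
\[
\sum_{\Gamma\in\mathcal{G}_k}\dim\bigl(\Gamma\cap\overline{p,\Delta_m}\bigr)\le\dim\Delta_m,
\]
so the family $\{\Gamma\cap\overline{p,\Delta_m}\}_{\Gamma\in\mathcal{G}_k,\,m}$ covers $\pi_p^{-1}(Q)\cap A$ with total dimension at most $i$, provided empty intersections are counted with dimension $-1$. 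I then pass to an honest essential-dimension cover by discarding the at-most-$k^2$ points of $\pi_p^{-1}(Q)\cap A$ whose only cover representatives are zero-dimensional or empty pieces; this cleanup is governed by the same collision counting as in (\ref{eqn:Apreserved}). Combined with $|\pi_p^{-1}(Q)\cap A|\ge|Q|$, this gives $g_i(A)\ge|Q|-k^2$.

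The main obstacle is the last bookkeeping step: matching the slack contributed by empty intersections in Lemma~\ref{th:minimalityOfG}'s sum to an at-most-$k^2$ loss of points in $\tilde Q^*$, while ensuring the remaining cover uses only flats of dimension at least one with total dimension at most $i$. This requires tracking, for each pair $(\Gamma,m)$, whether $\Gamma\cap\overline{p,\Delta_m}$ is a positive-dimensional flat, a single point, or empty, and using the disjointness of the $\Gamma_j$ to prevent overcounting.
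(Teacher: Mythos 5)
Your proof of (\ref{eqn:Apreserved}) is correct and is essentially the paper's own argument: the flats of $\mathcal{G}_k$ are pairwise disjoint (this does follow from Lemma \ref{th:minimalityOfG}, though your displayed deduction should read $2\dim(\Gamma\cap\Gamma')<\dim(\Gamma\cap\Gamma')$, forcing $\dim(\Gamma\cap\Gamma')<0$), $\pi_p$ is injective on each flat, and each unordered pair of flats accounts for at most one collision, so the loss is at most $\binom{k}{2}\le k^2$.

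For (\ref{eqn:projectA}), the step you flag as the ``main obstacle'' is a genuine gap, and it is exactly the point where your accounting fails. Applying Lemma \ref{th:minimalityOfG} to all of $\mathcal{G}_k$ with $\Lambda=\overline{p,\Delta_m}$ bounds only the sum in which empty intersections contribute $-1$; hence the total dimension of the positive-dimensional pieces $\Gamma\cap\overline{p,\Delta_m}$ is bounded only by $\dim\Delta_m$ plus the number of empty intersections, and summed over $m$ this can exceed $i$ by roughly $ik$. Discarding points that are covered only by zero-dimensional or empty pieces cannot repair this: deleting points never lowers the total dimension of the covering flats, so the resulting ``honest'' cover need not certify $K(\tilde Q^*)\le i$, and $g_i(A)\ge |Q|-k^2$ does not follow. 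The fix — and the paper's actual argument — is to change the family to which the lemma is applied, not the set of points kept: for each $m$, apply Lemma \ref{th:minimalityOfG} only to the subfamily $\mathcal{A}_m=\{\Gamma\in\mathcal{G}_k \mid \dim(\Gamma\cap\overline{p,\Delta_m})\ge 1\}$ when $|\mathcal{A}_m|\ge 2$, and when $|\mathcal{A}_m|=1$ note that since $p\notin\Gamma$ the single intersection is a proper subflat of $\overline{p,\Delta_m}$; either way the positive-dimensional pieces for this $m$ have total dimension at most $\dim\Delta_m$, with no slack from empty intersections to absorb. The points of $\pi_p^{-1}(Q)\cap A$ they miss lie on flats meeting $\overline{p,\Delta_m}$ in at most a single point, at most $k$ per $m$, hence at most $ik\le k^2$ in total, after which your construction of $\tilde Q^*$ goes through. (The paper phrases this per flat $\Lambda\in\mathcal{G}_i(\pi_p(A))$, bounding $|\Lambda\cap\pi_p(A)|\le g_{\dim\Lambda}(A)+k$ and using superadditivity of $g$, but it is the same argument.)
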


\begin{proof}
We first prove (\ref{eqn:projectA}).
Let $\Lambda \in \mathcal{G}_i(\pi_p(A))$, and let $\Lambda'$ be the preimage of $\Lambda$ under $\pi_p$; note that $\dim(\Lambda') = \dim(\Lambda)+1$.

Let 
$$\mathcal{L}(\Lambda) = \{\Gamma \cap \Lambda' \mid \Gamma \in \mathcal{G}_k, \dim(\Gamma \cap \Lambda') \geq 1\}.$$
Note that, since $p \notin A$, no flat in $\mathcal{L}(\Lambda)$ can contain $p$.
Hence, if $\mathcal{L}(\Lambda)$ contains a single flat $\Gamma$, then $\dim(\Gamma) < \dim(\Lambda') = \dim(\Lambda) + 1$.
On the other hand, if $|\mathcal{L}(\Lambda)| \geq 2$, then Lemma \ref{th:minimalityOfG} implies that $\sum_{\Gamma \in \mathcal{L}(\Lambda)}\dim(\Gamma) <\dim(\Lambda') = \dim(\Lambda) + 1$.
In either case, the flats of $\mathcal{L}(\Lambda)$ contain at most $g_{\dim(\Lambda)}(A)$ points of $A$.
Since $\Lambda$ is the projection of the points on flats of $\mathcal{L}(\Lambda)$ together with at most one point on each flat in $\mathcal{G}_k$ that does not intersect $\Lambda'$ in at least a line, we have that $|\Lambda \cap \pi_p(A)| \leq g_{\dim \Lambda}(A) + k$.
Note that, since $K((\mathcal{G}_i \cup \mathcal{G}_j) \cap P) \leq i+j$, we have that $g_i + g_j \leq g_{i+j}$ for any $i,j$.
In particular, $$\sum_{\Lambda \in \mathcal{G}_i(\pi_p(A))} g_{\dim \Lambda}(A) \leq g_i(A).$$
Hence, we have
$$g_i(\pi_p(A)) = \sum_{\Lambda \in \mathcal{G}_i(\pi_p(A))} |\Lambda \cap \pi_p(A)| \leq g_i(A) + ik,$$
which completes the proof of (\ref{eqn:projectA}).

It remains to prove (\ref{eqn:Apreserved}).
Let $\Gamma, \Gamma' \in \mathcal{G}_k$.
Since $\Gamma \cap \Gamma' = \emptyset$, we have $\dim(\Gamma' \cap \overline{\Gamma,p}) \leq 0$.
Hence, for each such pair of flats $\Gamma, \Gamma' \in \mathcal{G}_k$, there is at most one pair $q \in \Gamma, q' \in \Gamma'$ of points such that $\pi_p(q) = \pi_p(q')$.
In addition, each line incident to $p$ intersects each flat of $\mathcal{G}_k$ in at most one point, since otherwise $p$ would be contained in that flat.
Hence, the number of pairs of points $q,q' \in A$ such that $\pi_p(q) = \pi_p(q')$ is at most the number of pairs of flats in $\mathcal{G}_k$, which proves (\ref{eqn:Apreserved}).
\end{proof}

We now proceed to the main result of the section.
Theorem \ref{th:lowerBound} is slightly stronger than the lower bound of Theorem \ref{th:numberOfFlats}, to facilitate its inductive proof.

\begin{theorem}\label{th:lowerBound}
For $0 \leq k < K$, there are constants $c_l, c_k$ such that
\[f_k^{\leq c_l} = \Omega \left( \prod_{i=0}^k (n-g_i) \right ),\]
provided that $n-g_k \geq c_k$.
\end{theorem}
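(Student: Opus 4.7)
My plan is induction on $k$. The base case $k = 1$ is precisely Beck's Theorem \ref{th:BeckErdos}: since $g_0 = 0$, the claim reads $f_1^{\leq c_l} = \Omega(n(n - g_1)) = \Omega((n-g_0)(n-g_1))$, which is exactly what Beck provides, with $c_l = c_b$ and $c_1$ the constant from that theorem.

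For the inductive step at level $k \geq 2$, I would set $B = P \setminus \bigcup_{\Gamma \in \mathcal{G}_k} \Gamma$, so $|B| = n - g_k$, and project from each $p \in B$. Since $p \in P$ must lie in some $\Gamma_0 \in \mathcal{G}$, equation (\ref{eqn:dimProj}) forces $K(\pi_p) \leq K - 1$, so the inductive hypothesis at level $k - 1$ applies to $\pi_p(P)$, yielding
\[
f_{k-1}^{\leq c_l'}(\pi_p) \;=\; \Omega\!\left(\prod_{i=0}^{k-1}(|\pi_p| - g_i(\pi_p))\right).
\]
Every $(k-1)$-flat of $\pi_p$ with at most $c_l'$ image points pulls back through $p$ to a $k$-flat in $P$ containing at most $c_l := c_l' + 1$ points of $P$; and any such $k$-flat $\Gamma$ is produced in this way at most $|B \cap \Gamma| \leq c_l$ times, giving
\[
c_l \cdot f_k^{\leq c_l}(P) \;\geq\; \sum_{p \in B} f_{k-1}^{\leq c_l'}(\pi_p).
\]
So once I can show $|\pi_p| - g_i(\pi_p) = \Omega(n - g_i)$ for every $i \leq k-1$, multiplying the summand by $|B| = n - g_k$ reconstructs the full product $\prod_{i=0}^k (n-g_i)$, and the induction closes. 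Choosing $c_k$ large enough (in terms of $c_{k-1}$) ensures that the hypothesis $|\pi_p| - g_{k-1}(\pi_p) \geq c_{k-1}$ needed to apply induction to $\pi_p$ is met.

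The main obstacle is the comparison $|\pi_p| - g_i(\pi_p) = \Omega(n - g_i)$. A naive cone-over-$p$ argument shows only $g_i(\pi_p(P)) \leq g_{i+1}(P) - 1$, which is off by one index and too weak. The right tool is Lemma \ref{th:projectionOfA}: for $p \notin A := \bigcup_{\Gamma \in \mathcal{G}_k} \Gamma \cap P$, it gives $g_i(\pi_p(A)) \leq g_i(A) + k^2$ and $|\pi_p(A)| \geq |A| - k^2$, preserving the essential-dimension profile of $A$ up to $O(k^2)$ additive slack. The delicate work is transferring this control from $\pi_p(A)$ to $\pi_p(P)$: any essential-dimension-$\leq i$ subset $Q' \subseteq \pi_p(P)$ whose size substantially exceeds $g_i(\pi_p(A))$ must be supported mostly on $\pi_p(P \setminus A)$, and lifting the intersection $Q' \cap \pi_p(A)$ back through $p$ combined with the minimality of $\mathcal{G}_k$ should contradict the definition of $g_i(P)$. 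Taking $c_k$ sufficiently large compared to $c_{k-1}$ and $k^2$ absorbs all additive errors into the implicit constants, completing the induction.
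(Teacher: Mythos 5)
Your overall scheme (induction on $k$ with Beck's theorem as the base case, projecting from points outside the flats of $\mathcal{G}_k$, and closing by double counting) is the same as the paper's, but two essential steps are missing or wrong. The most concrete one is the pullback claim: a $(k-1)$-flat of $\pi_p$ containing at most $c_l'$ points of $\pi_p$ does not in general pull back to a $k$-flat with at most $c_l'+1$ points of $P$, because a single point of $\pi_p$ is the image of an entire line through $p$, which may carry arbitrarily many points of $P$. Since the quantity you must bound is $f_k^{\leq c_l}$ (and your own induction needs the restricted count $f_{k-1}^{\leq c_l'}$ at the previous level), this is not cosmetic. The paper devotes the last part of its proof to exactly this issue: it takes $C\subseteq\pi_p$ to be the points of multiplicity at least $c_2$, notes $|C|\leq c_2^{-1}(n-g_k)$ because each point of $\pi_p(A)$ has multiplicity at most $k$, and uses the already-proved upper bound (Theorem \ref{th:upperBound}, applied through $f_{k-2}(\pi_{\overline{q,p}})$) to get $I(C,\mathcal{F}_{k-1}(\pi_p))=O\bigl(c_2^{-1}\prod_{i=0}^{k-1}(n-g_i)\bigr)$; choosing $c_2$ large, the flats meeting $C$ can be discarded, and only the surviving flats lift to $k$-flats with at most $c_2c_l(k-1)$ points of $P$. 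Your argument needs some substitute for this step.

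The second gap is the estimate $|\pi_p|-g_i(\pi_p)=\Omega(n-g_i)$, which you correctly identify as the crux but assert for every $p\in B$ and leave essentially unproved. It is not true for every $p\in B$: nothing in your setup prevents most of $B$ from collapsing onto a bounded number of directions when projected from an unluckily chosen $p$, in which case $|\pi_p|$ is far too small relative to $g_i(\pi_p)$ in the range of $i$ where $n-g_i=\Theta(n-g_k)$. The paper's proof introduces the threshold $k'$ (the least $i$ with $|A|-g_i(A)<c_1|B|$), shows that when $k'<k$ no line contains $c_1|B|$ points of $B$, and then invokes the weak Dirac theorem (Theorem \ref{th:aveDirac}) to restrict to a subset $B'\subseteq B$ with $|B'|=\Omega(|B|)$ whose points each lie on $\Omega(|B|)$ lines spanned by $B$, which is what guarantees $|\pi_p(B)|=\Omega(|B|)$. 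The range $i<k'$ is then handled by Lemma \ref{th:projectionOfA} much as you intend, but the range $k'\leq i\leq k-1$ is handled by combining $g_i(\pi_p)\leq g_{i+1}\leq g_k$ with $|\pi_p(A)\cap\pi_p(B)|\leq kc_1|B|$ and the Dirac bound, not by comparing $g_i(\pi_p)$ with $g_i(\pi_p(A))$: your proposed lifting of $Q'\cap\pi_p(A)$ back through $p$ runs into precisely the off-by-one loss you flagged, because projected $B$-points can sit on low-dimensional flats of $\pi_p$ whose lifts through $p$ have one higher dimension. (A minor further point: what the induction requires is $K(\pi_p)>k-1$, not $K(\pi_p)\leq K-1$; the former follows once $|\pi_p|-g_{k-1}(\pi_p)>0$ is established, so it comes for free from the projection estimate rather than from your stated inequality.)
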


\begin{proof}
The proof is by induction on $k$.
The case $k=1$ is Theorem \ref{th:BeckErdos}.

Let
\begin{align*}
A &= \bigcup_{\Gamma \in \mathcal{G}_k} \Gamma \cap P, \\
B &= P \setminus A.
\end{align*}
Note that $|A| = g_k$ and $|B| = n-g_k \geq c_k$.

Let $c_1 < 1$ be a strictly positive constant to fix later.
Let $k'$ be the least integer such that $|A| - g_{k'}(A) < c_1|B| = c_1(n-g_k)$.

If $k' < k$, then no line contains $c_1|B|$ points of $B$.
Indeed, if $\ell$ is such a line, then $\mathcal{G}_{k'} \cup \ell$ contains $g_{k'} + c_1|B| > g_k$ points of $P$, which is a contradiction, since the sum of the dimensions of the flats of $\mathcal{G}_{k'} \cup \ell$ is $k' + 1 \leq k$.

If $k' = k$, let $B' = B$.
Otherwise, by Theorem \ref{th:aveDirac} (assuming $c_1 < c_d$), there is a set $B' \subseteq B$ with $|B'| = \Omega(|B|)$ such that each point of $B'$ is incident to $\Omega(|B|)$ lines spanned by $B$.

Fix $p \in B'$ arbitrarily.

We claim that, for $0 \leq i \leq k-1$,
\begin{equation}\label{eqn:projShrink}
|\pi_p| - g_i(\pi_p) = \Omega(n-g_i).
\end{equation}

Recall that $0 \leq i \leq k-1$, and $k' \leq k$, and hence, it will suffice to consider the cases that $i < k'$ and $k' \leq i \leq k-1$.

First, suppose that $i < k'$.
Since $k'$ is the least integer such that $|A| -g _{k'}(A) < c_1(n-g_k)$, we have that $|A| - g_i(A) \geq c_1(n-g_k)$.
Using this fact, together Lemma \ref{th:projectionOfA}, we have
\begin{align}
\nonumber n - g_i &= n - g_k + g_k - g_i ,\\ 
\nonumber & \leq  (c_1^{-1} + 1) (|A| - g_i(A)), \\
\nonumber &\leq (c_1^{-1} + 1)(|\pi_p(A)| - g_i(\pi_p(A)) + 2k^2), \\
\label{eq:piA-gi}& = O(|\pi_p(A)| - g_i(\pi_p(A))).
\end{align}

In the last line of the above derivation, we require $|\pi_p(A)| - g_i(\pi_p(A)) > 0$. 
This holds if $|A|-g_i(A) > 2k^2$, which holds if $c_1 c_k > 2k^2$.
Hence, we require $c_1 c_k > 2k^2$.

Since $\pi_p(A)$ is a subset of $\pi_p$, we have
$$|\pi_p| - g_i(\pi_p) \geq |\pi_p(A)| - g_i(\pi_p(A)).$$

Combined with (\ref{eq:piA-gi}), this is inequality (\ref{eqn:projShrink}).

Now, suppose that $k' \leq i \leq k-1$.

Let $\Gamma \in \mathcal{G}_{k'}(A)$.
Note that $|\overline{p,\Gamma} \cap B| < c_1 |B|$.
If this were not the case, then $\overline{p,\Gamma} \cup \mathcal{G}_{k'} \setminus \Gamma$ would have total dimension $k' + 1 \leq k$, and would contain at least $g_{k'}(A) + c_1|B| > |A| = g_k$ points.
Since $\mathcal G_{k'}$ contains at most $k'\leq k-1$ distinct flats, and the remaining points of $A$ contribute at most $|A| - g_{k'}(A) < c_1|B|$ points to $|\pi_p(A) \cap \pi_p(B)|$, we have that $|\pi_p(A) \cap \pi_p(B)| \leq kc_1|B|$.
Hence,
\begin{equation}\label{eqn:ABMostlyDisjoint}|\pi_p| \geq |\pi_p(A)| + |\pi_p(B)| - kc_1|B|.\end{equation}

Note that $g_i(\pi_p) \leq g_{i+1} \leq g_k$.
Hence, by inequality (\ref{eqn:Apreserved}) of Lemma \ref{th:projectionOfA}, we have that $|\pi_p(A)| - g_i(\pi_p) \geq |\pi_p(A)| - g_{k} \geq -O(1)$.
Combining this with inequality (\ref{eqn:ABMostlyDisjoint}) and the assumption that $|B| > c_k$, we have
\begin{align*}
|\pi_p| - g_i(\pi_p) &\geq |\pi_p(A)| + |\pi_p(B)| - k c_1 |B|- g_i(\pi_p), \\
&\geq c_d|B| - O(1) - k c_1 |B|, \\
&= \Omega( |B|),
\end{align*}
for appropriate choices of $c_1, c_k$.
Since $i\geq k'$, we have that $|B| = \Omega(n - g_i)$, and hence, this finishes the proof of inequality (\ref{eqn:projShrink}).

The inductive hypothesis applied to $\pi_p$, along with (\ref{eqn:projShrink}), implies that
\begin{equation}\label{eqn:projLB}
f_{k-1}^{\leq O(1)}(\pi_p) = \Omega\left(\prod_{i=0}^{k-1}(n-g_i)\right).
\end{equation}
Hence, each point in $B'$ is incident to $\Omega\left(\prod_{i=0}^{k-1}(n-g_i)\right)$ flats of dimension $k$ that are spanned by $P$.
Since the preimage of a point $q \in \pi_p$ may include many points of $P$, it remains to show that a substantial portion of these flats each contain at most $c_l(k)$ points of $P$.

Let $c_2$ be a large constant, to be fixed later.
Let $C \subset \pi_p$ be the set of points in $\pi_p$ such that each point in $C$ is the image of at least $c_2$ points in $P$ under projection from $p$.
Since each line incident to $p$ is incident to at most one point on each flat $\Gamma \in \mathcal{G}_k$, each point of $\pi_p(A)$ has multiplicity at most $k<c_2$.
Hence, $|C| \leq c_2^{-1}|B| = c_2^{-1}(n-g_k)$.

Let $q \in C$.
By Theorem \ref{th:upperBound},
\begin{equation}\label{eqn:incidencesWithQ}
f_{k-2}(\pi_{\overline{q,p}}) = O \left( \prod_{i=0}^{k-2} (n - g_i) \right),\end{equation}
and this is an upper bound on the number of incidences between $q$ and $(k-1)$-flats spanned by $\pi_p$.

The total number of $(k-1)$-flats spanned by $\pi_p$ that are incident to some point in $C$ is bounded above by the number of incidences between points in $C$ and flats in $\mathcal{F}_{k-1}(\pi_p)$.
Summing expression (\ref{eqn:incidencesWithQ}) over the points of $C$, and using the fact that $n-g_{k} < n-g_{k-1}$, the number of these incidences is
\begin{equation}\label{eqn:incidencesWithC}
I(C,\mathcal{F}_{k-1}(\pi_p)) = O\left(c_2^{-1} \prod_{i=0}^{k-1} (n-g_i) \right).
\end{equation}

By setting $c_2$ to be sufficiently large, we can ensure that the right side of (\ref{eqn:incidencesWithC}) is smaller than the right side of (\ref{eqn:projLB}).
Hence, we can subtract from the right side of (\ref{eqn:projLB}) the number of $k-1$ flats spanned by $\pi_p$ that contain a point of $C$ to obtain
\begin{equation}I(p, \mathcal{F}_k^{\leq c_2c_{l}(k-1)}) = \Omega\left( \prod_{i=0}^{k-1} (n-g_i) \right).\end{equation}
This bound applies for each of the $\Omega(n-g_k)$ points in $B'$, and hence (setting $c_l(k) = c_2 c_l(k-1)$)
\begin{equation}I(B', \mathcal{F}_k^{\leq c_l}) = \Omega\left( \prod_{i=0}^{k} (n-g_i) \right).\end{equation}
Since each of the flats of $\mathcal{F}_k^{\leq c_l}$ accounts for at most $c_l$ of these incidences, dividing the right side by $c_l$ immediately gives the claimed lower bound on $f_k^{\leq c_l}$.
\end{proof}

\section{Constructions}\label{sec:constructions}

In this section, we give several constructions that give lower bounds on the possible values that could be taken by $c_k$ in Theorem \ref{th:Purdy}.
We are in fact interested primarily in infinite families of examples for each $k$.
Hence, for this section, we define $c_k$ to be a function of $k$ as follows.
\begin{definition}
The constant $c_k$ is the minimum $t$ such that the following holds for all sufficiently large $n$.
If $P$ is a set of $n$ points in $\mathbb{R}^d$ or $\mathbb{C}^d$, then either
\begin{enumerate}
\item $n - g_k \leq  t$, or
\item $f_k > f_{k-1}$.
\end{enumerate}
\end{definition}
Note that this definition includes the hypothesis that $n$ is sufficiently large, which is absent in Theorem \ref{th:Purdy}.
Because of this aditional hypothesis, in order to show lower bounds of the form $c_k \geq t$, we find infinite families of point sets $S_n$, such that for each $S_n$ we have $|S_n| = n$, $f_k(S_n) \leq f_{k-1}(S_n)$, and $n-g_k(S_n) = t$.

To summarize the results on $c_k$ in this section, we show that $c_k$ increases monotonically (subsection \ref{sec:basicConstruction}), that $c_k \geq k - O(1)$ (subsection \ref{sec:highDimConstruction}), and that $c_2 \geq 4$ and $c_3 \geq 11$ (subsection \ref{sec:lowDimConstructions}).
Also in subsection \ref{sec:lowDimConstructions}, we give strong counterexamples to the conjecture of Purdy mentioned in the introduction.

In subsection \ref{sec:highDimConstruction}, we present a construction that we conjecture would show that $c_k \geq 2^{k-1}$ if it were successfully analyzed, but are unable to fully analyze the construction in higher dimensions.

All of the constructions in this section are based on the same basic idea, presented in subsection \ref{sec:basicConstruction}.

\subsection{Basic construction, and monotonicity}\label{sec:basicConstruction}

All of the constructions described in this section follow the same basic plan.
We start with a finite set $S$ of points having some known properties, then carefully select an origin point, and place a line $L$, containing a large number of points of $P$, perpendicular to the hyperplane containing $S$ and incident to the selected origin point.
This construction, along with its key properties, is described in Lemma \ref{th:raiseDimension}.

\begin{lemma}\label{th:raiseDimension}
Let $S$ be a set of $n$ points in $\mathbb{R}^d$, all contained in the hyperplane $H_0$ defined by $x_1 = 0$.
Denote by $f_k^o(S)$ the number of $k$-flats spanned by $S$ that are incident to the origin, and by $f_k^{\overline{o}}(S)$ the number of $k$-flats spanned by $S$ that are not incident to the origin; we define $f_0^o = 0$.
Let $L$ be a set of $m \geq 2$ collinear points contained in the line $\ell_0$ defined by the equations $x_i = 0$ for $i \neq 1$, and stipulate that the origin is not included in $L$.
Let $P = S \cup L$.
Then, for each $0 < k<d$,
\begin{equation}\label{eqn:const1}f_k(P) = m f_{k-1}^{\overline{o}}(S) + f_{k-1}^o(S) + f_k(S) + f_k(L).\end{equation}
\end{lemma}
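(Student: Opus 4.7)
The plan is to partition the $k$-flats spanned by $P$ according to how they interact with the two building blocks $S$ and $L$, and then show that each part gives exactly one term on the right-hand side. Fix a $k$-flat $\Gamma$ spanned by $P$, and write $S_\Gamma = \Gamma \cap S$, $L_\Gamma = \Gamma \cap L$. There are four cases, corresponding to the four terms.

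First I would handle the two trivial cases. If $L_\Gamma = \emptyset$ then $\Gamma \in \mathcal{F}_k(S)$, contributing the $f_k(S)$ term; if $S_\Gamma = \emptyset$ then $\Gamma \subseteq \ell_0$ (since $L \subseteq \ell_0$) and $\Gamma \in \mathcal{F}_k(L)$, contributing $f_k(L)$. Note this term is nonzero only when $k=1$, which is harmless.

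The interesting split is when both $S_\Gamma$ and $L_\Gamma$ are nonempty. The key geometric observation is that $\ell_0 \cap H_0 = \{o\}$ with $o \notin L$, so if $|L_\Gamma| \ge 2$ then $\ell_0 \subseteq \Gamma$ and hence $o \in \Gamma$. Applying equation (\ref{eqn:dimSpan}) to $\Gamma$ and $H_0$, using that $\Gamma \not\subseteq H_0$ (since it contains $\ell_0$) and $o \in \Gamma \cap H_0$, gives $\dim(\Gamma \cap H_0) = k-1$. Thus $\Gamma' := \Gamma \cap H_0$ is a $(k-1)$-flat through $o$, spanned by $S_\Gamma$ (the remaining generators of $\Gamma$ after $\ell_0$ is accounted for), and $\Gamma = \overline{\ell_0, \Gamma'}$ recovers $\Gamma$ from $\Gamma'$. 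Conversely, for any $\Gamma' \in \mathcal{F}_{k-1}^o(S)$, the flat $\overline{\ell_0, \Gamma'}$ is a $k$-flat of the required type. This gives a bijection with $\mathcal{F}_{k-1}^o(S)$, contributing $f_{k-1}^o(S)$.

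In the remaining sub-case $|L_\Gamma| = 1$, write $L_\Gamma = \{p\}$. Then $\Gamma$ is spanned by $p$ together with $S_\Gamma$, so $\Gamma' := \overline{S_\Gamma}$ is a $(k-1)$-flat in $H_0$ and $\Gamma = \overline{\Gamma', p}$. I claim $o \notin \Gamma'$: otherwise both $o$ and $p$ lie in $\Gamma$, forcing $\ell_0 \subseteq \Gamma$ and $|L_\Gamma| \geq 2$, contradiction. So $\Gamma' \in \mathcal{F}_{k-1}^{\overline{o}}(S)$. Conversely, given any $p \in L$ and any $\Gamma' \in \mathcal{F}_{k-1}^{\overline{o}}(S)$, the flat $\Gamma = \overline{\Gamma', p}$ is a $k$-flat whose intersection with $H_0$ must equal $\Gamma'$ (it contains $\Gamma'$, and if strictly larger it would be all of $\Gamma$, impossible since $p \notin H_0$), and whose intersection with $\ell_0$ is exactly $\{p\}$ (any other point of $\ell_0$ in $\Gamma$ would force $o \in \Gamma \cap H_0 = \Gamma'$). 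Thus the pair $(p, \Gamma')$ is recovered from $\Gamma$, giving a bijection with $L \times \mathcal{F}_{k-1}^{\overline{o}}(S)$ and a contribution of $m\, f_{k-1}^{\overline{o}}(S)$.

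The four contributions are disjoint by construction and exhaust all cases, which yields (\ref{eqn:const1}). The only part that requires real care is the bookkeeping in the mixed case: ensuring that the ``through the origin / not through the origin'' dichotomy for $\Gamma'$ correctly matches the ``$|L_\Gamma| \geq 2$ vs $|L_\Gamma|=1$'' dichotomy for $\Gamma$, which is precisely what the hypothesis $o \notin L$ and the transversality $\ell_0 \cap H_0 = \{o\}$ enforce.
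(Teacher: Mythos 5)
Your decomposition is exactly the paper's: split the spanned $k$-flats according to whether they meet $L$ in zero, one, or at least two points, and match the cases to the four terms. Your treatment of the two trivial cases and of the $|L_\Gamma|=1$ case is correct and careful. But there is a genuine gap in the $|L_\Gamma|\geq 2$ case, at the parenthetical claim that $\Gamma'=\Gamma\cap H_0$ is ``spanned by $S_\Gamma$ (the remaining generators of $\Gamma$ after $\ell_0$ is accounted for).'' Since $\ell_0$ already supplies two of the $k+1$ affinely independent points of $P$ that span $\Gamma$, the points of $S_\Gamma$ are only forced to span a flat of dimension $k-2$; in that case $\Gamma\cap H_0=\overline{S_\Gamma\cup\{o\}}$ is a $(k-1)$-flat through the origin that need not be spanned by $S$, so the forward map of your claimed bijection does not land in $\mathcal{F}_{k-1}^o(S)$. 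Concretely, take $d=3$, $k=2$, $S=\{(0,1,0),(0,0,1)\}$, and $L$ any $m\geq 2$ points of the $x_1$-axis other than the origin: the planes $\overline{\ell_0,(0,1,0)}$ and $\overline{\ell_0,(0,0,1)}$ are spanned by $P$ (each contains one point of $S$ and all of $L$), so $f_2(P)=m+2$, while the right-hand side of (\ref{eqn:const1}) equals $m\cdot 1+0+0+0=m$. So the step does not merely lack justification; the identity itself fails unless one assumes, in addition, that every $(k-1)$-flat through $o$ spanned by $S\cup\{o\}$ is already spanned by $S$.

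To be fair, the paper's own proof asserts the same thing with no more justification (``In this case, $\dim(\Gamma \cap H_0) = k-1$, and there are $f_{k-1}^o(S)$ such flats spanned by $S$''), so your writeup faithfully reproduces the paper's argument, gap included. In the paper's applications the gap is harmless for structural reasons: in the constructions built from Gr\"unbaum's arrangements the origin is a point of $S$, and in the hypercube and cross-polytope constructions $S$ is centrally symmetric about $o$; in either situation any flat through $o$ spanned by $S\cup\{o\}$ is spanned by $S$, so the $|L_\Gamma|\geq 2$ count is valid there (and where the origin is taken generic, the uncounted flats contribute only additive terms independent of $m$, so the intended inequalities survive for large $m$). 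A correct version of your proof should either add such a hypothesis on the pair $(S,o)$, or add to (\ref{eqn:const1}) a term counting the $k$-flats $\overline{\ell_0,\Delta}$ where $\Delta$ is a $(k-2)$-flat spanned by $S$ for which $\overline{o,\Delta}$ is not spanned by $S$.
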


\begin{proof}
Let $\Gamma \in \mathcal{G}_k(P)$.
If $\Gamma$ contains the origin and another point in $\ell_0$, then $\Gamma$ contains $\ell_0$ and hence contains each point of $L$.
In this case, $\dim(\Gamma \cap H_0) = k-1$, and there are $f_{k-1}^o(S)$ such flats spanned by $S$.
If $\Gamma$ contains exactly one point of $L$, then $\Gamma$ does not contain the origin, and $\dim(\Gamma \cap H_0) = k-1$.
Since there are $m$ choices for the point in $L$, the number of such flats is $m f_{k-1}^{\overline{o}}(S)$.
We also have those $k$-flats that are spanned individually by $S$ or $L$.
\end{proof}

Given an example that shows that $c_k \geq t$ for some $t$, Lemma \ref{th:raiseDimension} can be used to create an equally strong example for $c_{k+1}$, which implies that the sequence $c_2, c_3, \ldots$ is monotonic.

\begin{corollary}\label{th:ckMonotonic}
The sequence $c_2, c_3, \ldots$ increases monotonically.
\end{corollary}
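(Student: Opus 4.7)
The plan is to apply Lemma~\ref{th:raiseDimension} to lift a witness of $c_k \geq t$ to one for $c_{k+1} \geq t$. By the definition of $c_k$, for arbitrarily large $n$ there exists a set $S$ with $|S| = n$, $|S| - g_k(S) = t$, and $f_k(S) \leq f_{k-1}(S)$. I will embed $S$ into a hyperplane $H_0 \subset \mathbb{R}^d$ of sufficiently high ambient dimension, choose $o \in H_0$ in general position with respect to $S$ (so that $o \notin S$ and $o$ lies on no flat of dimension at most $k$ spanned by $S$), and set $P = S \cup L$, where $L$ is a set of $m$ points on the line $\ell_0$ perpendicular to $H_0$ through $o$, with $o \notin L$. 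The integer $m$ will be chosen large.

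The first task is to show $|P| - g_{k+1}(P) = t$ for $m$ sufficiently large. The lower bound $g_{k+1}(P) \geq g_k(S) + m$ follows immediately from the cover $\{\ell_0\} \cup \mathcal{G}_k(S)$, which has total dimension $k+1$ and covers $(n-t) + m$ points of $P$. For the matching upper bound I will analyze an arbitrary cover $\mathcal{G}'$ of a subset $P' \subseteq P$ by flats of dimension at least $1$ summing to at most $k+1$. If some flat in $\mathcal{G}'$ properly contains $\ell_0$, it may be replaced by $\ell_0$ together with its intersection with $H_0$ without increasing the total dimension. After such replacements, either (i) $\ell_0 \in \mathcal{G}'$, so the remaining flats cover $P' \cap S$ with total dimension at most $k$ and hence $|P' \cap S| \leq g_k(S)$, yielding $|P'| \leq g_k(S) + m$; or (ii) no flat of $\mathcal{G}'$ contains two points of $L$, in which case $|P' \cap L|$ is bounded by $|\mathcal{G}'| \leq k+1$ and $|P' \cap S| \leq g_{k+1}(S) \leq n$, giving $|P'| \leq n + k+1$, which is dominated by $g_k(S) + m$ once $m$ is large.

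The second task is to verify $f_{k+1}(P) \leq f_k(P)$. The genericity of $o$ ensures $f_{k-1}^o(S) = f_k^o(S) = 0$, and for $k \geq 2$ we have $f_k(L) = f_{k+1}(L) = 0$. Lemma~\ref{th:raiseDimension} therefore reduces to
\[
f_{k+1}(P) = m\, f_k(S) + f_{k+1}(S), \qquad f_k(P) = m\, f_{k-1}(S) + f_k(S),
\]
so that
\[
f_{k+1}(P) - f_k(P) = m\bigl(f_k(S) - f_{k-1}(S)\bigr) + \bigl(f_{k+1}(S) - f_k(S)\bigr).
\]
The coefficient of $m$ is non-positive by hypothesis, and the remaining term is a constant independent of $m$; taking $m$ sufficiently large (restricting, if necessary, to the subfamily of witnesses in which $f_k(S) < f_{k-1}(S)$ strictly, which is the typical regime of interest) forces the right-hand side to be at most $0$.

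The main obstacle is the case analysis in the first step, ensuring that no unexpected cover of $P$ by flats of total dimension $k+1$ produces a subset larger than $g_k(S) + m$. The argument must trade off the number of points of $L$ captured by low-dimensional flats against the loss in the dimension available for covering $S$, and relies crucially on choosing $m$ large enough that covers avoiding $\ell_0$ are wasteful compared to the canonical cover $\{\ell_0\} \cup \mathcal{G}_k(S)$.
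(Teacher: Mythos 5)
Your proposal is correct and takes essentially the same route as the paper: lift a witness for $c_k$ via Lemma \ref{th:raiseDimension} with a generically chosen origin, verify $g_{k+1}(P) = g_k(S) + m$ for $m$ large, and compare $f_{k+1}(P)$ with $f_k(P)$ term by term in $m$. Your explicit covering argument for $g_{k+1}(P)$ is just a more detailed version of the paper's genericity remark, and your restriction to witnesses with strict inequality $f_k(S) < f_{k-1}(S)$ matches the assumption the paper itself makes in its proof.
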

\begin{proof}
Let $1 < k < d$ and $c \geq 1$, and let $S$ be a set of points in $\mathbb{R}^d$, such that $f_k(S) < f_{k-1}(S)$, and such that $|S| - g_k(S) = c_k$.
Embed $S$ in the hyperplane defined by $x_1 = 0$ in $\mathbb{R}^{d+1}$, so that no flat spanned by $S$ is incident to the origin.
Let $L$ be a set of $m$ points contained in the line $x_i=0$ for $i \neq 1$, and not including the origin.
Then, by Lemma \ref{th:raiseDimension}, we have
$$f_{k+1}(P) = m f_k(S) + f_{k+1}(S) < m f_{k-1}(S) + f_k(S) = f_k(P),$$
for $m$ sufficiently large.

In addition, since $|L|$ is much larger than $|S|$, we may assume that $\mathcal{G}_{k+1}(P)$ contains $L$.
Since the origin is generic relative to the flats spanned by $S$, the number of points of $S$ in a $j+1$ flat that contains the origin is bounded by the number of points in a $j$ flat.
Hence, $\mathcal{G}_{k+1}(P)$ is the union of the line that contains $L$ and $\mathcal{G}_k(S)$, and hence $|P|-g_{k+1}(P) = |S| - g_k(S) = c_k \leq c_{k+1}$.
\end{proof}

\subsection{Constructions for arbitrary dimensions}\label{sec:highDimConstruction}

We describe two constructions that work for any sufficiently large $k$.
The first uses a hypercube as the set $S$ in the construction of Lemma \ref{th:raiseDimension}, and the second uses a cross-polytope as $S$.
We are unable to fully analyze the hypercube example in arbitrary dimensions, but conjecture that a complete analysis would show that $c_k \geq 2^{k-1}$.
The cross-polytope example shows that $c_k \geq k - O(1)$.

\vspace{4mm}
\noindent \textbf{Hypercube construction.}
We use Lemma \ref{th:raiseDimension} to describe an infinite family of sets of points, with an infinite number of members for each $k \geq 2$.
In particular, $S_n^k$, for $n\geq 2^{k+1}$, is a set of $n$ points in $\mathbb{R}^{k+1}$ such that $n - g_k(S_n^k) = 2^{k-1}$.
We conjecture that $f_k(S^k_n) < f_{k-1}(S_n^{k})$ for all $k$.
Proving this conjecture would show that $c_k \geq 2^{k-1}$.
Analyzing the construction for large $k$ is related to (though possibly easier than) the open problem of characterizing the set of flats spanned by the vertices of the hypercube $[-1,+1]^d$ in $\mathbb{R}^d$ (see \cite{aichholzer1996classifying}).
It is easy, though tedious, to analyze the construction in low dimensions; however, different, specific constructions for $k=2,3$ give better bounds on $c_k$ for $k \leq 4$.

Let $S_n^k = C^k \cup L$, where $C^k = (0, \pm 1, \ldots, \pm 1)$ is the set of vertices of a $k$-dimensional hypercube, and $L$ is the set of $m = n - 2^k$ collinear points with coordinates $(i, 0, \ldots, 0)$ for $i \in [1, n-2^k]$.

We claim that $g_k(S_n^k) = m+2^{k-1}$.
That $g_k \geq m + 2^{k-1}$ follows by considering the union of $L$ and a $(k-1)$-dimensional face of $C^k$.
To show that $g_k \leq m + 2^{k-1}$, we show that $g_{k-1}(C^k) = 2^{k-1}$; the claim on $g_k(S_k)$ follows as an immediate consequence, since $\mathcal{G}_k$ must contain $L$.

We show by induction that the intersection of a $j$-flat with $C^k$ contains at most $2^j$ points, for any $j \leq k$.
Note that $C^k = C_{-1}^{k-1} \cup C_1^{k-1}$, where $C_{i}^{k-1}$ (for $i \in \{-1,1\}$) is the set of vertices of a $(k-1)$-dimensional hypercube in the $(d-2)$-flat $H_{i}$ defined by $x_{0} = i$.
Let $\Gamma$ be a flat of dimension $\dim(\Gamma) = j$.
Either $\Gamma$ is contained in $H_{-1}$, or is contained in $H_1$, or intersects each of $H_{-1}$ and $H_1$ in a $(j-1)$-flat.
Assuming the inductive hypothesis that the intersection of a $j'$-flat with $C^{k-1}$ contains at most $2^{j'}$ points, it follows that $\Gamma$ contains at most $2^{j}$ points of $C^k$.
Since the sum of the dimensions of flats in $\mathcal{G}_{k-1}(C^{k})$ is $k$, it follows that $g_{k-1}(C^{k}) \leq 2^{k-1}$.

For $k=2$ and $k=3$, an exhaustive enumeration of the flats spanned by $C^{k}$ is easy to perform by hand, and, for $k=3$, yields
\begin{align*}
f_1^o(C^3) &= 4,\\
f_1^{\overline{o}}(C^3) &= 24,\\
f_2^o(C^3) &= 6,\\
f_2^{\overline{o}}(C^3) &= 14.
\end{align*}

Together with a similar count for $k=2$, and an application of Lemma \ref{th:raiseDimension}, we have
\begin{align*}
f_1(S_2) &= 4m + 7, \\
f_2(S_2) &= 4m + 3, \\
f_2(S_3) &= 24m + 24, \\
f_3(S_3) &= 14m + 7.
\end{align*}
Hence, our conjecture holds for these cases.

\vspace{4mm}
\noindent \textbf{Cross-polytope construction.}
We describe a family of sets $T_n^j$ of points for $j \geq 2$ and $n$ sufficiently large.
The set $T_n^j$ is a set of $n=m+6j$ points in $\mathbb{R}^{3j+1}$ such that, assuming $m$ is sufficiently large, then $f_{2j+2} < f_{2j+1} < f_{2j}$.
Furthermore, $n-g_{2j+2} = 2j-2$ and $n-g_{2j+1} = 2j$.
Taking $k=2j+2$ in this construction shows that $c_k \geq k-4$ for even $k \geq 6$, and taking $k=2j+1$ shows that $c_k \geq k-1$ for odd $k \geq 5$.

Let $D = D^{3j}$ be the vertices of a $3j$-dimensional cross-polytope in $\mathbb{R}^{3j+1}$, centered at the origin, contained in the hyperplane $x_1 = 0$.
In particular, the $6j$ vertices of $D$ are of the form $(0, \ldots,0, \pm 1, 0, \ldots, 0)$, where the nonzero entries occur for some vertex in all but the first coordinate.
We use $D$ as the set $S$ in the construction of Lemma \ref{th:raiseDimension}, so $T_n^j = D \cup L$, where $L$ is a set of $m$ points in the line $x_i=0$ for $i\neq 1$.
We will assume that $m$ is large relative to $6j$.

We first show that $f_{2j+2} < f_{2j+1} < f_{2j}$.
Let $v \in D$.
If a flat $\Gamma$ contains $v$ and $-v$, then $\Gamma$ contains the origin.
Hence, the $i$-flats spanned by $D$ that don't contain the origin each contain at most one of $v,-v$.
Since the non-opposite vertices of $D$ are linearly independent, an $i$-flat contains at most $i+1$ of them, and so $f^{\overline{o}}_i(D)$ is equal to the number of ways to choose $i+1$ non-opposite vertices from $D$, which is $2^{i+1}\binom{3j}{i+1}$.
Hence, we have
$$f_i^{\overline{o}}(D) = 2^{i+1}\binom{3j}{i+1} = 2^i \binom{3j}{i} \cdot 2\frac{3j-i}{i+1} = f_{i-1}^{\overline{o}}(D) \cdot 2\frac{3j-i}{i+1}.$$
Hence, if $(3j-i)/(i+1) < 1/2$, then $f_i^{\overline{o}}(D) < f_{i-1}^{\overline{o}}(D)$.
This holds if $i \geq 2j$.
Applying Lemma \ref{th:raiseDimension}, and using the assumption that $m$ is sufficiently large, we have
\begin{align*}
f_{2j+2} = f^{\overline{o}}_{2j+1}(D)m + O(1) &< f^{\overline{o}}_{2j}(D) m + O(1) = f_{2j+1}, \\
f_{2j+1} = f^{\overline{o}}_{2j}(D) m + O(1) &< f^{\overline{o}}_{2j-1}(D) m + O(1) = f_{2j}.
\end{align*}

Now we show that $n-g_{2j+2} = 2j - 2$ and $n-g_{2j+1} = 2j$.
In particular, we show that $g_i(D) = 2i$; since $m$ is large, $g_{i+1} = m + g_i(D)$, and so $n-g_{i+1} = 6j - g_i(D) = 6j-2i$, from which the claims easily follow.

Let $\Gamma$ be an $i$-flat, for $i\geq 1$.
If $\Gamma$ contains the origin, then it is a linear subspace and hence contains at most $i$ linearly independent vectors, and hence at most $2i$ vertices of $D$.
If $\Gamma$ does not contain the origin, then it contains at most $i+1$ linearly independent vectors, and does not contain any pair $v,-v \in D$; in this case, $\Gamma$ contains at most $i+1$ vertices.
In either case, $\Gamma$ contains at most $2i$ vertices.
Since the sum of the dimensions of the flats in $\mathcal{G}_i(D)$ is $i$, it's clear from this that $g_i(D) = 2i$.

\subsection{Stronger constructions for $k=2,3$}\label{sec:lowDimConstructions}

Gr\"unbaum and Shephard found and catalogued simplicial arrangements of planes in real projective $3$-space \cite{grunbaum1984simplicial}.
Among these are several examples that (after taking the dual arrangement of points) give sets of points that span more lines than planes, and that are not contained in a pair of lines, or in a plane and a point.
In particular, the arrangement $A_1^3(18)$ gives a set of $18$ points, spanning $60$ planes and $74$ lines, such that no plane or pair of lines contains more than $9$ of the points.
Later, Alexanderson and Wetzel \cite{alexanderson1986simplicial} found an additional simplicial arrangement of planes.
In the projective dual, this arrangement gives a set of $21$ points, spanning $90$ planes and $98$ lines, such that no plane or pair of lines contains more than $10$ of the points.

We can apply Lemma \ref{th:raiseDimension} with Alexanderson and Wetzel's construction.
By taking a generic point as the origin, and $|L|$ sufficiently large, this construction gives $c_3 \geq 11$.

For $k=2$, the hypercube example in section \ref{sec:highDimConstruction} gives the lower bound $c_2 \geq 2$.
We now show a slightly more sophisticated construction that achieves the bound $c_2 \geq 4$.

Gr\"unbaum has produced a lovely and useful catalog of the known simplicial line arrangements in the real projective plane \cite{grunbaum2009catalogue}.
We use one of the arrangements he describes as the foundation for the construction.
In particular, the point set shown in figure \ref{fig:points} is dual to the arrangmement $A(8,1)$ in Gr\"unbaum's catalog.

\begin{figure}[ht]
\caption{Base for construction showing $c_k \geq 4$}\label{fig:points}
\includegraphics[width=\textwidth]{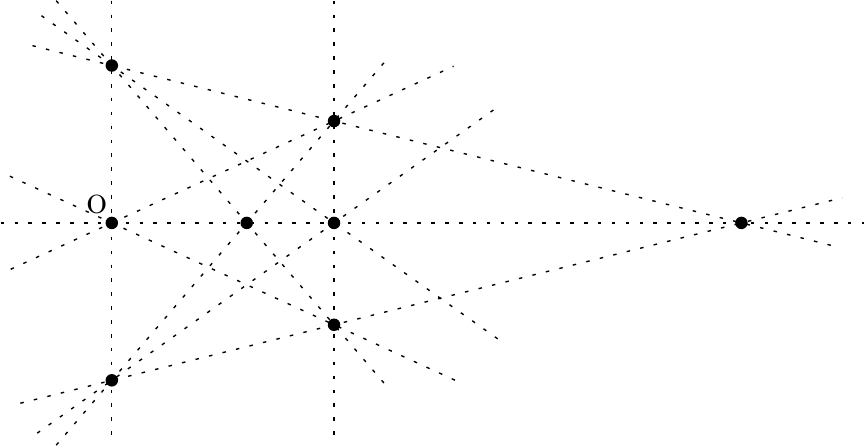}
\end{figure}

We apply Lemma \ref{th:raiseDimension} with the point set appearing in figure \ref{fig:points} as $S$, using the point marked ``o'' as the origin; i.e., let $P = S \cup L$, where $S$ is the point set in figure \ref{fig:points}, and $L$ is a set of $m$ collinear points contained in a line perpendicular to the plane spanned by $S$ and incident to the point marked ``o''.
By taking $m$ to be sufficiently large, we can ensure that the points of $L$ must be included in $\mathcal{G}_2$, and hence inspection of figure \ref{fig:points} shows that $n-g_2$ is $4$.
Further, we have  $f_1^{\overline{o}}(S) = 7$, $f_1^o(S) = 4$, and $f_0^{\overline{o}}(S) = 7$.
Hence, Lemma \ref{th:raiseDimension} gives
$$f_2(P) = 7m + 4 + 1 + 0 < 7m + 1 + 11 = f_1(P),$$
and so this construction shows that $c_2 \geq 4$.

\begin{figure}[ht]
\caption{Base for counterexample to ratio version of Purdy's conjecture}\label{fig:asymptoticExample}
\includegraphics[width=\textwidth]{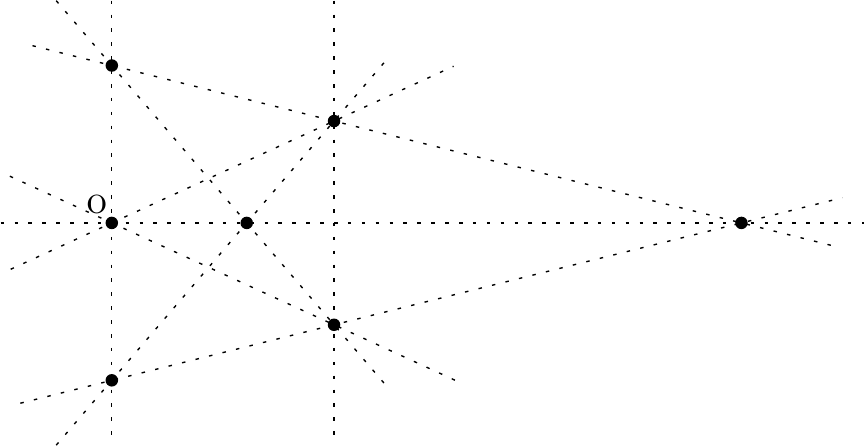}
\end{figure}

In light of the preceeding examples, it might be tempting to conjecture that, under the hypothesis of Purdy's conjecture (i.e., $P$ is a set of points that are not contained in the union of two lines or the union of a plane and a point), we at least have that $f_2 \geq f_1 - c$ for some universal constant $c$.
However, even this weaker conjecture is too optimistic.
To show this, we apply Lemma \ref{th:raiseDimension} with the point set appearing in figure \ref{fig:asymptoticExample} as $S$, using the point marked ``o" as the origin.
A brief examination of the figure reveals that $f_1^{\overline{o}}(S) = 5$ and $f_0^{\overline{o}}(S)=6$, and that $n-g_2 = 3$.
Hence, if we take $m$ to be large, it follows from Lemma \ref{th:raiseDimension} that $f_2  < (5/6)f_1 + O(1)$.

\bibliographystyle{plain}
\bibliography{purdy}

\end{document}